\newcommand{\R}{\mathbb{R}}
\newcommand{\N}{\mathbb{N}}
\newcommand{\C}{\mathbb{C}}
\newcommand{\M}{\mathcal{M}}
\newcommand{\F}{\mathcal{F}}
\renewcommand{\O}{\mathscr{O}}
\renewcommand{\S}{\mathcal{S}}
\newcommand{\dd}[1]{\ensuremath{\operatorname{d}\!{#1}}}
\DeclareMathOperator{\Op}{\mathrm{Op}^{w}_\mathit{h}}
\DeclareMathOperator{\Tr}{Tr}
\renewcommand{\Im}{\mathrm{Im}\,}
\renewcommand{\Re}{\mathrm{Re}\,}
\newcommand{\Cinf}{\mathscr{C}^\infty}
\newcommand{\deriv}[2]{\frac{\partial#1}{\partial#2}}
\newcommand{\abs}[1]{\left|#1\right|}
\newcommand{\norm}[1]{\left\|#1\right\|}
\newcommand{\pscal}[2]{\langle#1,#2\rangle}
\newcommand{\Lh}{\mathscr{L}_h}
\newcommand{\phy}{\varphi}
	\theoremstyle{plain}
\newtheorem{theorem}{{Theorem}}[section] 
\newtheorem*{theorem*}{Theorem}
\newtheorem{proposition}[theorem]{Proposition}
\newtheorem*{proposition*}{Proposition}
\newtheorem{corollary}[theorem]{Corollary}
\newtheorem*{corollary*}{Corollary}
\newtheorem{lemma}[theorem]{Lemma}
\newtheorem{scholie}[theorem]{Scholium}
\newtheorem*{lemma*}{Lemma}
	\theoremstyle{definition}
\newtheorem*{definition*}{Definition}
	\theoremstyle{remark}
\newtheorem{remark}[theorem]{Remark}
\newtheorem{notation}[theorem]{Notation}
	\theoremstyle{remark}
\newtheorem*{assumption*}{Assumption}
\title[Semiclassical exponential localization]{Exponential
  localization\\ in 2D pure magnetic wells} \author{Y. Guedes
  Bonthonneau} \address[Y. Guedes Bonthonneau]{Univ Rennes, CNRS,
  IRMAR --- UMR 6625, F-35000 Rennes, France}
\email{yannick.bonthonneau@univ-rennes1.fr}
\author{N. Raymond} \address[N. Raymond]{Laboratoire Angevin de
  Recherche en Mathématiques, LAREMA, UMR 6093, UNIV Angers, SFR
  Math-STIC, 2 boulevard Lavoisier 49045 Angers Cedex 01, France}
\email{nicolas.raymond@univ-angers.fr}
\author{S. V\~u Ng\d{o}c} \address[S. V\~u Ng\d{o}c]{Univ Rennes,
  CNRS, IRMAR --- UMR 6625, F-35000 Rennes, France}
\email{san.vu-ngoc@univ-rennes1.fr}
\begin{document}

\begin{abstract}
  We establish a magnetic Agmon estimate in the case of a purely
  magnetic single non-degenerate well, by means of the
  Fourier-Bros-Iagolnitzer transform and microlocal exponential
  estimates \emph{à la} Martinez-Sjöstrand.
\end{abstract}

\maketitle

\section{Introduction}

The question of proving the localization of a quantum state has many
mathematical facets. In this article, we investigate the case of the
magnetic Laplacian and prove, under a geometric confinement property on
the magnetic intensity, an Agmon-type localization estimate for
low-lying eigenfunctions of this operator.

\medskip

The interest in the magnetic Laplacian has several origins.  From a
quantum mechanical viewpoint, this operator is a simplified model for
describing the motion of an electron in a strong magnetic field, when
the electrostatic interaction and the relativistic effects are
ignored; its construction is explained for
instance~\cite{doucot-pasquier2004}. In the book ~\cite{FH10}, the
authors recall that the same operator also appears in the
linearization of the Ginzburg-Landau functional in the domain of
superconductors. In Spectral Geometry, the magnetic Laplacian is often
regarded as a natural variant of the Laplace-Beltrami operator when
the symplectic form of the cotangent bundle is twisted by the
pull-back of a closed 2-form from the base manifold, and has proved
important in the study of magnetic geodesics; see for
instance~\cite{kordyukov-taimanov2019}, and references therein.  In
the present study, we consider the magnetic Laplacian on the plane,
which can be defined as follows.

\medskip

When $B$ is a real function on $\R^2$, a semiclassical magnetic
Laplacian associated with $B$ is a family of operators, depending on a
parameter $h>0$, of the form
\begin{equation}\label{eq:laplacian}
  \Lh=(-i h\nabla-\mathbf{A})^2=(h D_1-A_1(x))^2
  +(h D_2-A_2(x))^2\,,\quad D=-i\partial\,.
\end{equation}
Here, $\mathbf{A}=(A_1,A_2)$ is a potential vector associated with
$B$, i.e $B=\partial_1 A_2 -\partial_2 A_1$. Notice that the
semiclassical limit $h\to 0$ is related to the limit of strong
magnetic field $(1/h)B$.

\medskip

The spectral theory of $\Lh$ has received the attention of
several authors; in particular, it follows from \cite{HK11} that if
$B$ is smooth and admits a global non-degenerate minimum, uniquely
attained at some $x\in \R^2$, the bottom of the spectrum of
$\Lh$ (for $h$ small enough) is comprised of multiplicity one eigenvalues
$\lambda_0(h)< \lambda_1(h)\dots$, with
\[
\lambda_j(h) = b_0 h + (C_1 + C_2 j)h^2 +o(h^2)\,.
\]
The corresponding eigenfunctions are concentrated around $x$, in the
sense that their $L^2$ mass outside of a fixed neighbourhood of $x$ is
$\mathscr{O}(h^\infty)$. The purpose of this article is to obtain a
stronger concentration in the case when $B$ is \emph{real
  analytic}.

\subsection{Statement of the result}

From now on, we assume the following:
\begin{enumerate}[label=\textup{(\roman*)}., ref=\textup{(\roman*)}]
  \item\label{as.i} The magnetic field $B$ has a unique minimum $b_0$
    at $x=0$. It is positive, non-degenerate, and not attained at
    infinity ($\liminf B > b_0$).
  \item\label{as.ii} There exists a complex strip
    $\S=\R^2 + i \interval{-a}a^2$ ($a>0$) to which $B$
    can be holomorphically extended as a bounded function.
  \item\label{as.iii} The function
    $(x_1,x_2)\mapsto \int_0^{x_1}\deriv{B(u,x_2)}{x_2}\dd u$ is
    bounded on the strip $\S$.
  \end{enumerate}

For example, $B = 2 - e^{-|x|^2}$ satisfies our assumptions. We will say that a function $f:\R^n\to \R$ \emph{goes linearly to infinity at infinity} if there is a constant $C>0$ such for $|x|>C$, $f> |x|/C$. Our main result is the following exponential localization estimate.
\begin{theorem}\label{thm.main}
  Consider a Lipschitz function $d : \R^2\to \R_+$ with a unique and
  non-degenerate minimum at $0$, $d(0)=0$, and going linearly to
  infinity at infinity, and let $K>0$. Then there exist
  $C,h_0,\epsilon > 0$ such that, for all $h\in(0,h_0)$ and
  $u\in L^2(\R^2)$ such that
  \[
    \Lh u = h\mu u \qquad \text{with} \qquad \mu\leq b_0 +K
    h\,,
  \]
  we have
  \[
    \int_{\R^2}e^{\epsilon d(x)/h}|u(x)|^2\dd x\leq
    C\|u\|^2_{L^2(\R^2)}\,,
  \]
\end{theorem}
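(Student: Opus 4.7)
The plan is to transport the eigenvalue equation to phase space via a Fourier--Bros--Iagolnitzer transform $T_h$, prove a weighted microlocal estimate à la Martinez--Sjöstrand for the conjugated operator, and then transfer the resulting exponential bound back to physical space by FBI inversion. As a preliminary, I would invoke the polynomial concentration of the low-lying eigenfunctions near $x=0$ (which follows from \cite{HK11}) to reduce the problem to a truly microlocal question near the single point $(0,\mathbf{A}(0))\in T^*\R^2$, up to an $\O(h^\infty)\norm{u}^2$ error; this is harmless since we only seek an estimate of exponential order.

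Next, using the holomorphic extension of $B$ on the strip $\S$ (assumption \ref{as.ii}) together with the boundedness coming from assumption \ref{as.iii}, one can pick a potential $\mathbf{A}$ which is itself holomorphic and bounded on $\S$, so that an FBI transform adapted to the magnetic phase is well defined in this analytic category. The conjugated operator $P_h := T_h \Lh T_h^{*}$ is then, modulo exponentially small remainders, a semiclassical pseudodifferential operator on phase space with principal symbol $p(x,\xi) = (\xi-\mathbf{A}(x))^2$; the degeneracy of $p$ along the section $\{\xi=\mathbf{A}(x)\}$ is compensated at the subprincipal level by an effective magnetic term proportional to $hB(x)$, which accounts for the harmonic-oscillator structure at the bottom of the spectrum.

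The central step is to construct a weight $\Phi(x,\xi)\geq 0$ on the image of $T_h$, vanishing at $(0,\mathbf{A}(0))$ and satisfying $\Phi(x,\xi)\geq \epsilon d(x)-C$ as $x\to\infty$, so that the conjugated operator $e^{\Phi/h} P_h e^{-\Phi/h} - h\mu$ admits a Gårding-type lower bound outside any prescribed neighbourhood of $(0,\mathbf{A}(0))$. Because $p$ vanishes along an entire Lagrangian rather than at an isolated point, $\Phi$ cannot be chosen to depend only on $x$: it must be engineered as a plurisubharmonic function adapted to the FBI contour, whose trace on the zero section sees the magnetic well $B(x)-b_0$. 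The non-degeneracy of $B$ at $0$ (assumption \ref{as.i}) provides the required quadratic lower bound near the origin, while the linear growth of $d$ at infinity is matched by the coercivity $\liminf B>b_0$, producing a linear lower bound on $\Phi$ through a Lipschitz viscosity-type construction.

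Once such a $\Phi$ is in place, combining the eigenvalue equation with the microlocal Gårding inequality yields $\norm{e^{\Phi/h}T_h u}_{L^2}\leq C\norm{u}_{L^2}$, and standard FBI inversion (after integration in the $\xi$-fibre) converts this into the desired physical-space bound with weight $e^{\epsilon d(x)/h}$. The main obstacle, and really the entire novelty of the argument, is the construction of $\Phi$ and the verification of the weighted Gårding inequality on an analytic FBI side: in contrast with the classical Agmon setting, where the principal symbol has a non-degenerate minimum at a single phase-space point, here the minimum is a whole Lagrangian, and the weighted estimate must genuinely feel the subprincipal term $hB(x)$. Controlling the analytic remainders produced by the exponential conjugation of a pseudodifferential operator on the strip $\S$ is exactly what the Martinez--Sjöstrand machinery delivers, and assumptions \ref{as.ii}--\ref{as.iii} are tailored to make it applicable here.
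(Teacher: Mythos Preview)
Your high-level strategy---FBI transform, exponential conjugation à la Martinez--Sjöstrand, and the observation that the characteristic set is an entire Lagrangian so that the subprincipal term $hB$ must drive the estimate---is exactly the paper's. But two concrete steps in your outline do not work as stated. First, the preliminary ``$\O(h^\infty)$ localisation'' is not harmless: the target inequality carries the weight $e^{\epsilon d(x)/h}$, so an $\O(h^\infty)\|u\|^2$ error in the \emph{unweighted} $L^2$ norm becomes exponentially large once multiplied by that weight; you cannot discard the tail of $u$ this way. The paper never performs such a reduction---every estimate is global. Second, you cannot choose $\mathbf{A}$ bounded: since $B\geq b_0>0$ everywhere, Stokes' theorem forces any primitive to grow at least linearly, so the symbol $(\xi-\mathbf{A}(x))^2$ is not in a class to which the Martinez FBI calculus for $S(1)$ symbols applies directly.

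The paper handles both difficulties by inserting an explicit normal form \emph{before} the FBI step: a Darboux change of variables followed by a linear metaplectic transform $\mathcal{M}$ straightens the characteristic Lagrangian to $\{x_1=\xi_1=0\}$ and turns the principal symbol into a quadratic form $p_{\mathcal{M}}(X_1;X_2)$ in $X_1=(x_1,\xi_1)$ with coefficients bounded on a complex strip; one then divides by $1+p_{\mathcal{M}}$ to land in a genuine $S(1)$ class. The weighted FBI estimate is proved globally in two stages---first transverse decay in $X_1$ from ellipticity away from $X_1=0$, then longitudinal decay in $X_2=(x_2,\xi_2)$ by computing the subprincipal Toeplitz term to order $h$ and recognising, for each fixed $X_2$, an effective harmonic oscillator with ground energy $h\sqrt{\det\mathcal{Q}_{X_2}}=hB$. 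Your single plurisubharmonic $\Phi$ glosses over exactly this splitting, and without the normal form it is unclear how you would extract the harmonic-oscillator lower bound rigorously and globally. Finally, the return to physical space is not quite ``standard FBI inversion'': because of the metaplectic factor, the paper has to bound the kernel of $\mathcal{M}T^*$ explicitly via Schur's lemma to pass from phase-space decay to spatial decay.
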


Observe that here, the third assumption~\ref{as.iii} seems technical,
and depends on a choice of a system of coordinates, but we have not
been able to remove it. Also note that since we are not trying to
optimize constants in our theorem, the value of $a>0$ in~\ref{as.ii}
is not essential. As a consequence, to lighten notations, we will use
$a>0$ as a generic constant throughout the paper. The size of the
strip on which we are working will be reduced a finite number of
times.

\subsection{Eigenvalues asymptotics}

Strong localization of eigenfunctions, such as the one claimed by
Theorem~\ref{thm.main}, is often a footprint of discrete
spectrum. Indeed, under assumption~\ref{as.i}, it follows from the
usual theory (see~\cite{Avron-Herbst-Simon78}) that below
$h \liminf B$, the spectrum of $\Lh$ is discrete. Let
$\lambda_0(h)\leq \lambda_1(h)\leq \dots \lambda_{\ell}(h)\leq \dots
\leq \liminf B$ be the (possibly finite) sequence of such eigenvalues,
repeated according to their multiplicity. 

The following theorem has been established via a dimensional reduction
in~\cite{HK14} (see also~\cite{HK11} and the review paper~\cite{HK09})
and via a Birkhoff normal form in~\cite{RVN15}. In fact, this theorem
does not require the analyticity of $B$ (i.e assumptions~\ref{as.ii}
and~\ref{as.iii}), but rather $\mathscr{C}^\infty$ bounds on $B$.
\begin{theorem}[\cite{HK11, HK14},~\cite{RVN15}]\label{thm.HKRVN}
  \begin{equation}\label{eq.HK}
    \forall \ell\in\mathbb{N}\,,\quad
    \lambda_{\ell}(h) =
    b_{0}h+\left(2\ell\frac{\sqrt{\det H }}{b_{0}}+
      \frac{{(\mathrm{Tr}\, H^{\frac{1}{2}})}^2}{2b_{0}}\right)h^2+o(h^2)\,,
  \end{equation}
  where $b_0=\min_{\R^2}B$ and $H=\frac{1}{2}\mathrm{Hess}_{(0,0)} B$.
\end{theorem}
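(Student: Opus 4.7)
The plan is to follow the semiclassical Birkhoff normal form strategy of \cite{RVN15}; the alternative argument of \cite{HK11, HK14} proceeds by dimensional reduction but yields the same conclusion. Since the argument requires only $\mathscr{C}^\infty$ control on $B$, the analyticity assumptions \ref{as.ii}--\ref{as.iii} of Theorem~\ref{thm.main} are not needed here.

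The starting point is the symplectic geometry of the principal symbol $p_0(x,\xi)=|\xi-\mathbf{A}(x)|^2$ of $\Lh$. The characteristic set $\Sigma=\{\xi=\mathbf{A}(x)\}\subset T^*\R^2$ is a \emph{symplectic} submanifold: the pull-back of $d\xi\wedge dx$ to $\Sigma$ equals the magnetic form $B(x)\,dx_1\wedge dx_2$, while the transverse Hessian of $p_0$ is positive definite and corresponds to a harmonic oscillator of frequency $2B(x)$. Choosing a symplectic diffeomorphism adapted to this splitting, one introduces normal canonical coordinates $(z_1,z_2)$ in which $p_0=B(x)(z_1^2+z_2^2)+\O(|z|^3)$; near the origin, $\Sigma\simeq\R^2$ carries an effective Planck constant of order $h/b_0$.

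\textbf{Normal form and effective Hamiltonian.} Quantize this picture by an $h$-dependent Fourier integral operator, microlocally unitary near $\Sigma$, conjugating $\Lh$ to an $h$-pseudodifferential operator that commutes modulo $\O(h^\infty)$ with the cyclotron harmonic oscillator $\mathcal{I}:=\Op(z_1^2+z_2^2)$ (spectrum $\{(2n+1)h\,:\,n\in\N\}$). The obstructions to this conjugation at each step satisfy a homological equation in the Moyal ring generated by $\mathcal{I}$; the explicit eigenstructure of $\mathcal{I}$ makes it solvable, and one obtains an asymptotic expansion $F_h\sim f_0(x,D_x;\mathcal{I})+h\,f_1+h^2\,f_2+\cdots$ with $f_0(x,\xi;\tau)=B(x)\tau$ and a subprincipal term $f_2$ computable from the Taylor expansion of $p_0$ at $\Sigma$. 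Functional calculus in $\mathcal{I}$ splits $F_h$ into Landau band operators indexed by $n\in\N$; eigenvalues below $(b_0+Kh)h$ live entirely in the $n=0$ band, where, dividing by $h$, the effective Hamiltonian reads as a $2$D Schrödinger operator on $\Sigma$ with Planck constant $h/b_0$ and symbol $B(x)+hr(x,\xi)+\O(h^2)$. Taylor expanding $B=b_0+\pscal{Hx}{x}+\O(|x|^3)$ around the minimum reduces this, microlocally, to a semiclassical $2$D harmonic oscillator; its spectrum, computed by the standard symplectic eigenvalue formula (in the $B_0$-symplectic form), gives exactly the second bracket in \eqref{eq.HK}.

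\textbf{Bounds.} The upper bound in \eqref{eq.HK} follows by constructing $\ell$-th order quasi-modes from Hermite functions pulled back through the FIO; the lower bound follows from the min-max principle, combined with a microlocal partition of unity that isolates a neighbourhood of $\Sigma$ over the relevant spectral window, together with assumption \ref{as.i} ($\liminf B>b_0$) which ensures that $\Lh/h$ is bounded below by $b_0+c$ for some $c>0$ outside a fixed neighbourhood of the origin. \textbf{Main obstacle.} The principal technical difficulty is the rigorous construction of the normal form: solving the homological equations in the $h$-Weyl calculus to all orders, controlling remainders uniformly, and patching the microlocal normal form near $\Sigma$ with the coarse calculus far from $\Sigma$ while preserving the spectral information below $(b_0+Kh)h$. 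Once the normal form is established, extracting the explicit eigenvalue formula is essentially an exercise on the harmonic oscillator.
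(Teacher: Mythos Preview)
The paper does not supply its own proof of this theorem: it is quoted as an external result, with the sentence preceding the statement noting that it ``has been established via a dimensional reduction in~\cite{HK14} \dots and via a Birkhoff normal form in~\cite{RVN15}.'' Your sketch follows the second of these routes (the Birkhoff normal form of~\cite{RVN15}) and is a faithful high-level summary of that argument, so there is nothing to compare against and no gap to flag.
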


\subsection{Complex WKB expansions}
With Theorem~\ref{thm.HKRVN} comes the question of describing the
eigenfunctions. Inspired by the results about the semiclassical
Schrödinger operator with an electric potential, we can wonder whether
the complex version of the famous Wentzel-Kramers-Brillouin (WKB)
Ansatz can be adapted to the magnetic case. Such constructions,
solving formally the eigenvalue problem, are rather rare in the
context of the pure magnetic Laplacian; see however~\cite[VI,
§2]{maslov94}. Their existence has been established for the first time
in a multi-scale framework in~\cite{BHR16} and then in non-degenerate
magnetic wells (\emph{i.e.}, under Assumption~\ref{as.i}) in
\cite{BR19}. Let us recall the latter result (which was generalized to
the Riemannian setting in~\cite{Tho-PhD}).

\begin{theorem}[\cite{BR19}]\label{thm.WKB}
  Under Assumption~\ref{as.i}, and after a
  rotation, we can assume
  \begin{equation}\label{eq.DLB}
    B(x_{1},x_{2})=b_{0}+\alpha x^2_{1}+\gamma x_{2}^2+\O(\|x\|^3)\,,\quad \mbox{ with }0<\alpha\leq\gamma\,.
  \end{equation}
  Let $\ell\in\mathbb{N}$. There exist
  \begin{enumerate}[label=\textup{(\roman*)}.]
  \item a neighborhood $\mathcal{V}$ of $(0,0)$ in $\R^2$,
  \item an analytic function $S$ on $\mathcal{V}$ satisfying
    \[
      \Re S(x)=\frac{b_{0}}{2}
      \left[\frac{\sqrt{\alpha}}{\sqrt{\alpha}+\sqrt{\gamma}}x^2_{1} +
        \frac{\sqrt{\gamma}}{\sqrt{\alpha}+\sqrt{\gamma}}x^2_{2}\right]
      + \O(\|x\|^3)\,,
    \]
  \item a sequence of analytic functions $(a_{j})_{j\in\mathbb{N}}$ on
    $\mathcal{V}$,
  \item a sequence of real numbers $(\mu_{j})_{j\in\mathbb{N}}$
    satisfying
    \[
      \mu_{0}=b_{0}\,,\quad
      \mu_{1}=2\ell\frac{\sqrt{\alpha\gamma}}{b_{0}}+\frac{(\sqrt{\alpha}+\sqrt{\gamma})^2}{2b_{0}}\,,
    \]
  \end{enumerate}
  such that, for all $J\in\mathbb{N}$, and uniformly in $\mathcal{V}$,
  \[
    e^{S/h}\left((-i h\nabla-\mathbf{A})^2-h\sum_{j\geq 0}^{J}\mu_{j}
      h^j\right)\left(e^{-S/h}\sum_{j\geq 0}^{J} a_{j}
      h^j\right)=\O(h^{J+2})\,.
  \]
\end{theorem}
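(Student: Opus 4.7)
The plan is to adapt the classical WKB machinery to a complex magnetic phase. I would first conjugate $\Lh$ by $e^{S/h}$. Setting $\omega := \mathbf{A} - i\nabla S$, a direct computation yields
\[
  e^{S/h}\Lh e^{-S/h} = \omega\cdot\omega + ih\bigl(2\omega\cdot\nabla + \operatorname{div}\omega\bigr) - h^2\Delta.
\]
Substituting $\sum_j a_j h^j$ into the conjugated eigenvalue equation and matching powers of $h$ produces a triangular hierarchy: at order $h^0$ one finds the complex eikonal $\omega\cdot\omega = 0$, and at order $h^{n+1}$ ($n\ge 0$) a transport equation $(\mathcal{T}-\mu_0)a_n = F_n$, where $\mathcal{T} := i(2\omega\cdot\nabla + \operatorname{div}\omega)$ and $F_n$ depends explicitly on $(a_0,\dots,a_{n-1};\mu_1,\dots,\mu_n)$.

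Next I would solve the eikonal. The condition $\omega\cdot\omega = 0$ forces $\omega(x) = f(x)(1,\varepsilon i)$ in $\C^2$ with $\varepsilon\in\{\pm 1\}$ and $f$ scalar. Requiring $i\nabla S = \mathbf{A}-\omega$ to be a gradient reduces the problem to a single scalar $\overline{\partial}$-type equation on $f$ driven by $B = \partial_1 A_2 - \partial_2 A_1$. Near $0$, I would solve it by a Hamilton--Jacobi argument applied to the complexified principal symbol $p(x,\xi)=(\xi-\mathbf{A})^2$: the stable complex Lagrangian through $(0,0)$ of the linearized Hamilton flow integrates to an analytic $S$ on a neighborhood $\mathcal{V}$ of $0$. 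The branch $\varepsilon$ is chosen so that $\Re S$ is positive definite; a direct computation using $\mathrm{Hess}_0 B = \operatorname{diag}(2\alpha, 2\gamma)$ from~\eqref{eq.DLB} then produces the prescribed quadratic coefficients $\tfrac{b_0}{2}\tfrac{\sqrt{\alpha}}{\sqrt{\alpha}+\sqrt{\gamma}}$ and $\tfrac{b_0}{2}\tfrac{\sqrt{\gamma}}{\sqrt{\alpha}+\sqrt{\gamma}}$.

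With $S$ in hand, $\omega(0) = 0$ and the leading part of $\mathcal{T}$ at the origin is a first-order operator with linear coefficients; after a Gaussian-type conjugation it becomes a two-dimensional harmonic oscillator with frequencies encoding $\sqrt{\alpha}$ and $\sqrt{\gamma}$. The first transport equation $(\mathcal{T}-\mu_0)a_0 = 0$ then forces $\mu_0 = b_0$ and identifies $a_0$ as the $\ell$-th eigenpolynomial of this effective oscillator. For $n\ge 1$, the equation $(\mathcal{T}-b_0)a_n = F_n - \mu_n a_0$ is solved by a Fredholm alternative at the formal-power-series level: pairing against the dual eigenstate uniquely fixes $\mu_n$, after which $a_n$ is chosen analytic on $\mathcal{V}$ (possibly shrunk once, independently of $n$). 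The explicit value of $\mu_1$ in the statement is the first manifestation of this compatibility, combining the contribution of $-\Delta a_0$ with the quadratic part of $\mathcal{T}$.

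The hardest step is the construction of $S$: one must simultaneously satisfy the complex eikonal and achieve the prescribed positive-definite quadratic part of $\Re S$. This requires selecting the correct branch of the stable complex Lagrangian at $(0,0)$, verifying that it projects diffeomorphically onto $\R^2_x$, and computing its generating function explicitly in terms of $\alpha,\gamma$. The subsequent transport analysis, while computationally heavier, is essentially mechanical once the harmonic-oscillator normal form of $\mathcal{T}$ is established; propagating analyticity uniformly on a fixed $\mathcal{V}$ then follows from a standard resolvent estimate for $\mathcal{T}$ on an analytic symbol space.
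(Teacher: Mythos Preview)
The paper does not prove Theorem~\ref{thm.WKB}: it is quoted verbatim as a result of~\cite{BR19} (``Let us recall the latter result''), with no argument given in the present text. There is therefore no proof here against which to compare your proposal.

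That said, your outline is the standard complex WKB scheme and is, in broad strokes, the strategy actually carried out in~\cite{BR19}: conjugation by $e^{-S/h}$, the complex eikonal $(\mathbf{A}-i\nabla S)\cdot(\mathbf{A}-i\nabla S)=0$, identification of the phase via the stable complex Lagrangian of the linearized Hamilton flow at $(0,0)$, and an inductive solution of the transport hierarchy by a Fredholm alternative for the first-order operator $\mathcal{T}$. Two points in your sketch would need to be made precise to become an actual proof: first, the passage from ``$\omega\cdot\omega=0$ forces $\omega=f(1,\varepsilon i)$'' to an honest construction of $S$ is not a scalar $\overline\partial$ problem but genuinely the selection of a positive Lagrangian for the Hessian of the principal symbol, and the claimed quadratic coefficients of $\Re S$ require an explicit spectral computation on that Hessian; second, the claim that $\mathcal{T}$ reduces, after a conjugation, to a harmonic oscillator with the stated frequencies is the heart of the transport analysis and deserves more than an assertion---in~\cite{BR19} this is where the values $\mu_0=b_0$ and the formula for $\mu_1$ are derived. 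None of this is a gap in your plan, only in its level of detail.
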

The WKB constructions in~\cite{BHR16, BR19} give a positive answer to
the open problem mentioned by Helffer in~\cite[Section 6.1]{H09}: in
generic situations with pure magnetic field, WKB constructions
corresponding to the low lying spectrum exist. Once the WKB analysis
is done, we want to know to which extent the Ansätze are
approximations of the exact eigenfunctions $u_\ell\in L^2(\R^2)$. It
follows from Theorem~\ref{thm.HKRVN} that, when $h$ is small enough,
the eigenvalues are simple and separated by a gap of order $\sim
h^2$. Thanks to the Spectral Theorem, we deduce that the
WKB Ansätze are approximations in the $L^2$-sense, and even in a weighted $L^2$-space thanks to Theorem \ref{thm.main} (up to taking a smaller $\varepsilon$).

\begin{corollary}\label{cor:approximation-quasimodes}
	Denote by $u_{\ell,J} =\chi(x)e^{-S/h}\sum_{j\geq 0}^{J} a_{j} h^j$,
	with $\chi\in\Cinf_0(\mathcal{V})$, and constant around the
	origin. Then, for fixed $\ell\in \N$ and $\varepsilon>0$ small
	enough, we have for some $\theta\in \R$
	\[
	\|e^{\varepsilon d(x)/h}( e^{i\theta} u_\ell - u_{\ell,J}) \|_{L^2(\R^2)} =
	\O( h^{\frac{J}{2}} ).
	\]
\end{corollary}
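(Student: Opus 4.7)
The plan is to split the proof into three stages: a WKB-to-quasimode estimate, a spectral comparison in plain $L^2(\R^2)$, and a Cauchy--Schwarz interpolation against a weighted bound at twice the desired weight. Throughout, I would first renormalize the amplitude so that $\|u_{\ell,J}\|_{L^2}=1+\O(h^\infty)$, which amounts to taking $a_0$ of size $h^{-1/2}$.

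The first stage turns Theorem \ref{thm.WKB} into a genuine $L^2$ remainder. Multiplying by $\chi$ and using that $[\Lh,\chi]$ acts on $e^{-S/h}\sum a_j h^j$ on a region where $\Re S$ is bounded below by a positive constant, one would obtain
\[
  \left\|\left(\Lh-h\mu^{(J)}(h)\right)u_{\ell,J}\right\|_{L^2}=\O(h^{J+2}),\qquad \mu^{(J)}(h):=\sum_{j=0}^J\mu_j h^j.
\]
Theorem \ref{thm.HKRVN} provides a spectral gap of order $h^2$ around $\lambda_\ell(h)$ and forces $h\mu^{(J)}(h)=\lambda_\ell(h)+o(h^2)$, so that the spectral theorem yields
\[
  u_{\ell,J}=e^{i\theta}u_\ell+w_J,\qquad \|w_J\|_{L^2}=\O(h^J),
\]
for a suitable phase $\theta\in\R$ absorbing the argument of the overlap $\langle u_{\ell,J},u_\ell\rangle$.

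The second stage is to bound both summands in the weighted norm of weight $2\varepsilon$. For the eigenfunction, Theorem \ref{thm.main} applied with $4\varepsilon$ in place of $\varepsilon$ (admissible as long as $\varepsilon$ itself is small enough) gives $\|e^{2\varepsilon d/h}u_\ell\|_{L^2}\leq C$. For the quasimode, the explicit Gaussian structure yields $\|e^{2\varepsilon d/h}u_{\ell,J}\|_{L^2}=\O(1)$: near $0$, $\Re S(x)\gtrsim |x|^2$ by Theorem \ref{thm.WKB}, while $d(x)\leq C'|x|^2$, so $-2\Re S/h+4\varepsilon d/h\leq -c|x|^2/h$ for $\varepsilon$ small, and the resulting Gaussian integral exactly balances the $h^{-1}$ coming from the normalized amplitude. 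Hence $\|e^{2\varepsilon d/h}w_J\|_{L^2}=\O(1)$.

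The final stage is a Cauchy--Schwarz interpolation:
\[
  \|e^{\varepsilon d/h}w_J\|_{L^2}^2=\int_{\R^2}e^{2\varepsilon d/h}|w_J|^2\dd x\leq \|e^{2\varepsilon d/h}w_J\|_{L^2}\cdot\|w_J\|_{L^2}=\O(h^J),
\]
which gives $\|e^{\varepsilon d/h}(e^{i\theta}u_\ell-u_{\ell,J})\|_{L^2}=\O(h^{J/2})$, as claimed. The main delicate point is not the interpolation itself but ensuring that a single $\varepsilon>0$ works simultaneously for (i)~the weighted eigenfunction bound of Theorem \ref{thm.main} at weight $4\varepsilon$ and (ii)~the positivity of $2\Re S-4\varepsilon d$ near $0$; the existence of such an $\varepsilon$ is precisely what forces the qualifier \emph{``$\varepsilon>0$ small enough''} in the statement.
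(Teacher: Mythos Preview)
Your proof is correct and follows essentially the same route as the paper's: an $L^2$ approximation via the spectral gap from Theorem~\ref{thm.HKRVN}, a weighted bound at weight $2\varepsilon$ obtained by combining Theorem~\ref{thm.main} for $u_\ell$ with the explicit Gaussian decay of $u_{\ell,J}$, and the Cauchy--Schwarz interpolation $\|e^{\varepsilon d/h}w\|_{L^2}^2\leq\|w\|_{L^2}\,\|e^{2\varepsilon d/h}w\|_{L^2}$. The paper is terser---it writes the orthogonal decomposition $u_{\ell,J}=\alpha e^{i\theta}u_\ell+w$ and bounds $1-\alpha$ separately---but the substance is identical.
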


(This will be proved at the end of section \ref{sec.5}). In contrast with Theorem \ref{thm.main}, the WKB Ansätze decay like
$e^{-\Re S/h}$ away from the magnetic well; thus, the approximation
should actually hold in a slight perturbation of the weighted space
$L^2(e^{-2\Re S/h})$. Behind this question lies the tunneling effect
problem: such exponential estimates are the heart of the analysis of
the interaction between multiple magnetic wells. The present paper
does not go that far\footnote{The only known (and optimal) result of
  pure magnetic tunnelling has recently been proved in a
  two-dimensional setting in \cite{BHR19} by means of microlocal
  dimensional reductions.}, but establishes that the eigenfunctions
decay like $e^{-\phy(x)/h}$ for some non-negative function $\phy$.
These types of estimates are well-known and proved in the
\emph{electric} Schrödinger operator $-h^2\Delta + V$, where they go by
the name of Agmon (see~\cite{Agmon82, HJ84, Simon84}). As we will see,
the purely magnetic case seems to necessitate a significantly more
advanced strategy, based on the Fourier-Bros-Iagolnitzer (FBI)
transform. (In~\cite{HJ84}, the FBI transform does appear, but not for
proving the exponential localization; it is used in a second step, to
control the asymptotic expansion of eigenvectors and eigenvalues.)

\subsection{Failure of the naive Agmon estimates}
Let us explain formally why the electric strategy fails in giving the
optimal Agmon estimates in the pure magnetic case (see also
\cite[Prop. 4.23]{Ray17} for a slightly different presentation). This
strategy is based on the following formula:
\[
  e^{\varphi/h}(-i h\nabla-\mathbf{A})^2e^{-\varphi/h} =
  (-i h\nabla-\mathbf{A}+\nabla\varphi)^2\,,
\]
where $\varphi$ is bounded and Lipschitz continuous, and on using the
\emph{coercivity} of the real part
\[
  \Re\langle e^{\varphi/h}(-i h\nabla-\mathbf{A})^2e^{-\varphi/h}
  u,u\rangle=\|(-i h\nabla-\mathbf{A})u\|^2-\|\nabla\varphi\, u\|^2\,,
\]
where $u\in\Cinf_0(\R^2)$.  Then, we want to use the magnetic field,
and we notice that
\[
  \|(-i h\nabla-\mathbf{A})u\|^2\geq h\int_{\R^2}B(x)|u|^2\dd x\,,
\]
so that, for all $\lambda\in\R$,
\[
  \Re\left\langle
    \left(e^{\varphi/h}(-i h\nabla-\mathbf{A})^2-\lambda\right)e^{-\varphi/h}
    u,u\right\rangle\geq
  \int_{\R^2}\left(h B(x)-|\nabla\varphi|^2-\lambda\right)|u|^2\dd x\,.
\]
From this last inequality, we see that the only possibility to control
the gradient is that $\varphi$ actually depends on $h$. With the
choice $\varphi=h^{\frac 12}\Phi$, where $\Phi$ is the Agmon distance
(to $0$) associated with the metric
$\left(B-b_0-|\nabla\Phi|^2\right)_+\dd x^2$, we can deduce that, for
eigenvalues such that $\lambda=b_0 h+\O(h^2)$, the
corresponding eigenfunctions $\psi(=e^{-\varphi/h}u)$ satisfy, for $h$
small enough,
\begin{equation}\label{eq.Agmon0}
  \int_{\R^2} e^{2\Phi/h^{\frac 12}}|\psi|^2 \dd x\leq C\|\psi\|^2\,.
\end{equation}
Due to the non-degeneracy of the minimum of $B$, $\Phi$ may be chosen
with a unique and non-degenerate minimum at $0$. Thus,
\eqref{eq.Agmon0} tells us for instance that the ground state is
\emph{a priori} exponentially localized at the scale $h^{\frac 14}$
near the minimum. This is consistent with Theorem~\ref{thm.WKB}, but
much worse than expected. One should be able to prove that the
eigenfunctions, just as the WKB quasi-modes suggest, are localized at
the scale $h^{\frac 12}$ near the minimum. That it is indeed the case
is the main result of this article.

\subsection{Related results}
Some articles have been devoted to the Agmon estimates in the presence
of a magnetic field, but almost always with an additional electric
potential.  For instance, in~\cite{HJ88}, the decay estimates are
inherited from the electric potential and the magnetic field is
considered as a perturbation (see in particular~\cite[p. 629]{HJ88}).
In the same spirit, Agmon estimates are considered in~\cite[Theorem
1.1]{Nakamura99} (see also the closely related articles~\cite{E96,
  Nakamura96}) in the case of an electric well with constant magnetic
field. It is proved that the magnetic field improves the decay of the
eigenfunctions away from the electric well.

We will see in this paper that pure magnetic Agmon estimates at the
\enquote{right} semiclassical scale can be obtained as projections of
microlocal exponential estimates. Our strategy will be inspired by the
ideas of Martinez~\cite{Martinez97} (see also~\cite{Nakamura98},
and~\cite{MS99} in relation with the corresponding WKB analysis). The
fact that we are able to refine this point of view, which is based on
the FBI transform, and to apply it to establish our new magnetic Agmon
estimates, is reminiscent of Sjöstrand's pioneer work on analytic
hypo-ellipticity~\cite{Sjostrand83}.

\begin{remark}
  Throughout our analysis, we will meet some known close links
  between magnetic and Toeplitz operators. These connections are
  described, for instance, in~\cite{CdV95}, or~\cite[Section
  4]{Kordyukov18}. In the context of Toeplitz operators, exponential
  decay estimates of eigenfunctions have been the subject of the
  recent works~\cite[Theorem 1.3]{Kordyukov18} and~\cite[Theorem
  C]{Deleporte18}. In these papers, the semiclassical parameter is of
  the form $h=p^{-1}$, where $p\in\N$ is the degree of tensorization
  of a line bundle.
\end{remark}

\subsection{Organization and strategy}
In Section~\ref{sec.2}, we perform various reductions to put the
magnetic Laplacian in a \enquote{normal form}. Section~\ref{sec.3} is
central in our analysis and is devoted to general properties of the
Fourier-Bros-Iagolnitzer transform. Our presentation closely follows
and, sometimes, completes the one exposed in the book by Martinez
\cite[Chapter 3]{Martinez-book}. This part of the investigation can
also be considered an interpretation of the magnetic Laplacian as a
Toeplitz operator. In Section~\ref{sec.4}, we prove that the FBI
transform of an eigenfunction (with low energy) is exponentially
localized at the scale $h^{\frac 12}$ near
$0\in\R^2\times(\R^2)^*$. We proceed in two steps: firstly, we prove
the exponential microlocalization near the characteristic manifold
(Theorem~\ref{thm.AgmonX1}); secondly, we establish an exponential
localization inside the manifold (Theorem \ref{thm.AgmonX2}). In
Section~\ref{sec.5}, we use the microlocal exponential estimates to
deduce Theorem~\ref{thm.main}.

\section{Normal form}\label{sec.2}

In~\cite{RVN15}, the second and third author constructed a Fourier
integral operator $U_h$ that conjugates the magnetic Laplacian
$\Lh$ to an operator of the form
\[
  \Op(f(\mathcal{H},x_2,\xi_2)) + \O(h^\infty),
\]
microlocally near the characteristic set of $\Lh$, where
$\mathcal{H}=h ^2D_{x_1}^2 + x_1^2$ and $\Op$ denotes the Weyl
quantization. If the symbol $f$ were analytic, and the remainder
$\O(h^\infty)$ improved to $\O(e^{-C/h})$, this
would imply a natural (and probably optimal) exponential estimate on
the bottom eigenfunctions of $\Lh$. However, the FIO $U_h$
is constructed in a relatively non-explicit fashion, including a
generically divergent Birkhoff normal form, and tracking those
estimates down would require quite sophisticated tools of analytic
microlocal analysis.

Since we \enquote{only} want to obtain decay of eigenfunctions and not
the expansion of the bottom eigenvalues of $\Lh$ to any
power of $h$, we will only need a rather crude normal form.
\begin{lemma}\label{lemma:normal-form-L_h}
  Under assumptions \ref{as.i}, \ref{as.ii}, \ref{as.iii}, there
  exists $a>0$ and, for $h>0$, a unitary operator $U_h$ acting on
  $L^2(\R^2,d x)$ such that
\begin{equation}
U_h \Lh U_h^{-1} = \Op( p_{\mathscr{L}}),
\end{equation}
where $p_{\mathscr{L}}$ is an $h$-dependent holomorphic function on
$\R^4 + i[-a,a]^4$, such that
\begin{equation}\label{eq:form-symbol}
p_{\mathscr{L}} = g^{11}\xi_1^2 + 2 g^{12} \xi_1 x_1 + g^{22} x_1^2 + h^2 q,
\end{equation}
where $g^{11}$, $g^{12}$, $g^{22}$ and $q$ are holomorphic, bounded,
and on $\R^4$ they are real valued. Additionally, the $g^{i j}$ are
critical at $0$, and
\begin{equation}
B(x,\xi)= \sqrt{g^{11} g^{22} - (g^{12})^2},
\end{equation}
when restricted to $\R^4$, admits a positive non-degenerate minimum at $0$, uniquely attained, and not attained at infinity.
\end{lemma}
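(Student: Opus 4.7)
The strategy combines a gauge choice, a global symplectic straightening of the characteristic manifold, and the unitary quantization of the resulting canonical transformation. First, fix the gauge $A_1\equiv 0$, $A_2(x_1,x_2)=\int_0^{x_1} B(t,x_2)\,dt$. By~\ref{as.ii}, $A_2$ extends holomorphically to $\S$, and~\ref{as.iii} is precisely what ensures that $\partial_{x_2} A_2$ is bounded there. The principal symbol is then $p_0 = \xi_1^2 + (\xi_2 - A_2(x))^2$, and the characteristic set $\Sigma = \{\xi_1=0,\,\xi_2=A_2(x)\}$ is a symplectic submanifold of $\R^4$, on which the restricted symplectic form is $B\,dx_1\wedge dx_2$, non-degenerate since $B\geq b_0>0$.

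Second, construct a global symplectomorphism $\kappa$ of $\R^4$ sending $\Sigma$ onto $\{x_1=\xi_1=0\}$ and extending holomorphically to a substrip. The building block is $x_1^\sharp(x_2,\xi_2)$, the implicit solution of $A_2(x_1^\sharp,x_2)=\xi_2$, which is globally defined (since $\partial_{x_1}A_2=B\geq b_0>0$ makes $A_2(\cdot,x_2)$ a global diffeomorphism) and extends to a complex strip by the analytic implicit function theorem, using~\ref{as.iii}. One builds $\kappa$ so that $(x_2,\xi_2)$ restricts on $\Sigma$ to the Darboux coordinates $(x_2, A_2(x))$ of the induced symplectic form, while the transverse coordinates $(x_1,\xi_1)$ vanish on $\Sigma$. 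Post-composing with a smooth family $R(x_2,\xi_2)\in \operatorname{Sp}(2,\R)$ acting on $(x_1,\xi_1)$ symplectically diagonalizes the transverse Hessian of $p_0\circ\kappa^{-1}$ at $\Sigma$; such a family can be chosen globally and holomorphically because the relevant target space $\operatorname{Sp}(2,\R)/\operatorname{SO}(2)$ is contractible. A final Birkhoff-type normalization, supported near $0$, eliminates the remaining pure-$(x_1,\xi_1)$ cubic terms of $p_0\circ\kappa^{-1}$ at $0$; this is possible because $\operatorname{ad}_{b_0(x_1^2+\xi_1^2)/2}$ is invertible on the space of cubic polynomials in $(x_1,\xi_1)$.

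Third, quantize $\kappa$ as a unitary FIO $U_h$ on $L^2(\R^2)$, standard for a symplectomorphism of $\R^4$ isotopic to the identity. Egorov's theorem in the Weyl calculus then gives $U_h \Lh U_h^{-1} = \Op(p_0\circ\kappa^{-1} + h^2 q)$ with $q$ holomorphic and bounded on the strip, the $\O(h)$ subprincipal correction vanishing since $p_0$ is real. Since $p_0\circ\kappa^{-1}$ vanishes to second order on $\{x_1=\xi_1=0\}$, Hadamard's lemma provides holomorphic bounded $g^{11}, g^{12}, g^{22}$ with
\[
p_0\circ\kappa^{-1} = g^{11}\xi_1^2 + 2g^{12}\xi_1 x_1 + g^{22} x_1^2.
\]
The symplectically invariant discriminant $g^{11}g^{22}-(g^{12})^2$ equals the determinant of the transverse Hessian of $p_0\circ\kappa^{-1}$, hence coincides with $B^2$ after transport, and inherits from~\ref{as.i} the positive non-degenerate minimum at $0$ that is uniquely attained and not attained at infinity. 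Criticality of each $g^{ij}$ at $0$ follows from the Step~2 diagonalization (which yields $g^{12}=0$ and $g^{11}=g^{22}=B$ along $\Sigma$, hence with vanishing differential at $0$ by $dB(0)=0$) combined with the Birkhoff step (which kills the first transverse jet of $g^{ij}$ at $0$).

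The main technical challenge is to carry out these constructions \emph{globally} on the complex strip with uniform bounds, and to produce an \emph{exactly} unitary $U_h$ whose Egorov remainder lives cleanly in the $h^2$ slot; assumption~\ref{as.iii} is essential throughout, as it is what guarantees uniform control of the $x_2$-derivatives of $A_2$ on $\S$ and therefore of $\kappa$ in the imaginary direction.
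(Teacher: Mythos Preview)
Your overall strategy—straighten the characteristic manifold symplectically and quantize—is sound in spirit, but it differs substantially from the paper's approach and contains a real gap.

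The paper does something far more elementary: $U_h = \M^{-1} U_\kappa$, where $U_\kappa$ is the unitary pull-back by a change of variables \emph{on the base} $\R^2$ (the Darboux map $\tilde x_1=\int_0^{x_1}B(t,x_2)\,dt$, $\tilde x_2=x_2$), and $\M$ is an explicit linear metaplectic operator quantizing $(x,\xi)\mapsto(x+A\xi,\xi)$ with $A=\begin{psmallmatrix}0&1\\1&0\end{psmallmatrix}$. Both factors are exactly unitary with closed-form kernels; Egorov is exact for $\M$, and the $h^2$ remainder for $U_\kappa$ comes from a direct quadratic-form computation (Appendix~A). One then reads off $g^{22}=1$, $g^{12}=\tilde\alpha\circ\kappa_\M$, $g^{11}=(\tilde B^2+\tilde\alpha^2)\circ\kappa_\M$, and criticality of the $g^{ij}$ at $0$ follows by inspection from $dB(0)=0$ and $\alpha(x)=\O(|x|^2)$. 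No phase-space FIO, no Birkhoff step, and the explicit form of $U_h$ is later essential in Section~5 to pull microlocal decay back to spatial decay—something a generic FIO would not allow.

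Your argument has two genuine problems. First, the Birkhoff normalization ``supported near $0$'' is incompatible with the conclusion: a cutoff destroys holomorphy on the strip, while the global alternative (time-$1$ flow of a cubic Hamiltonian in $X_1$) is analytic but produces a symplectomorphism that grows polynomially, so after composition the $g^{ij}$ are no longer bounded on $\R^4+i[-a,a]^4$. Without this step you do not obtain $\partial_{X_1}g^{ij}(0)=0$, so the criticality claim fails. Second, the Egorov remainder claim is wrongly justified: for a general unitary FIO in Weyl quantization, the $\O(h)$ term is not killed merely by ``$p_0$ real''—realness only forces the full Weyl symbol of the conjugate to be real, not the subprincipal correction to vanish. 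Getting an $\O(h^2)$ error requires a specific normalization of the FIO's amplitude, which you have not arranged. The paper sidesteps both issues entirely by its explicit two-step construction.
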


This type of operators, whose symbol is a quadratic form of some
variables, with parameters, was studied by several authors in the
context of hypo-ellipticity in the smooth category
(see~\cite{Boutet-Grigis-Helffer76} and references therein), and in
the analytic category by Sjöstrand in \cite{Sjostrand83}. It would
be interesting to obtain a global version of Sjöstrand's results in
order to give a different proof of Theorem~\ref{thm.main}.

Observe that the exponential decay of eigenfunctions is not preserved
by general unitary operators. However, we will see that $U_h$ can be
explicitly described, so this will not be an issue.

For a given magnetic field $B$, the choice of magnetic potential
$\mathbf{A}$ is not unique. Any other choice $\mathbf{A'}$ differs
from $\mathbf{A}$ by a gradient, \emph{i.e.}
$\mathbf{A'} = \mathbf{A} + \nabla f$. Then, the corresponding
magnetic Laplacian is obtained by conjugating $\Lh$ by the
multiplication operator $u\mapsto e^{if/h}u$, which is unitary, both
pointwise and in $L^2$. Hence, it does not impact
Theorem~\ref{thm.main}. Therefore we may, and we will, assume that
\begin{equation}\label{eq.gauge}
  \mathbf{A}(x)=(0,A_2(x))\,,\qquad A_2(x)=\int_0^{x_1}B(u,x_2) \dd
  u\,,
\end{equation}
Notice that $A_2$ is
real-analytic, admits a holomorphic extension to the strip $\S$, and
its derivatives are bounded on $\S$ according to assumption~\ref{as.iii}.

For $d=2$ or $d=4$ depending on the context and $a>0$, it will be
convenient to set $\mathcal{S}_{a} := \R^d + i [- a, a]^d$.

\subsection{Normal form near the characteristic set}

In this section, we prove Lemma \ref{lemma:normal-form-L_h}. The operator $U_h$ will be decomposed as the composition of a change of variables and a metaplectic operator. Let us start by constructing the change of variable. 

The first idea, which is quite standard, is to choose coordinates in which the
magnetic field is constant \emph{as a 2 form}. In that case, the
natural symplectic structure becomes canonical, and all the magnetic
information is transferred to a variable Riemannian metric. The
guiding model is the case of constant magnetic field and constant
metric, where the magnetic Laplacian takes the form
\[
  \Lh^{\text{ct}} = {(h D_{x_1})}^2 + {(h D_{x_2} - B
    x_1)}^2,
\]
and its bottom eigenvalue is $h B$. The solutions, sometimes called
\emph{zero modes}, to
\[\left(\Lh^{\text{ct}}-h B\right) u = 0\] 
are of the form $e^{-B x_1^2/2h} f$, with $f$ holomorphic, and they
play an important role in the spectral analysis of the magnetic Dirac
operator (see \cite{BLTRS19}).

Coming back to our problem, there are many diffeomorphisms $\kappa$ of
$\R^2$ such that $\kappa_\ast B$ is the canonical $2$ form (Darboux'
Lemma), so we pick the following
\[
  (x_1,x_2) = \kappa(\tilde{x}_1,\tilde{x}_2),\quad \tilde{x}_1 =
  \int_0^{x_1} B(x',x_2) \dd x',\ \tilde{x}_2 = x_2.
\]
That this defines indeed a global diffeomorphism of $\R^2$ is ensured
by assumption~\ref{as.i}.
\begin{lemma}\label{lemma:darboux}
  Under Assumption~\ref{as.i},~\ref{as.ii} and~\ref{as.iii}, $\kappa$
  is a bi-Lipschitz analytic diffeomorphism of $\R^2$ such that
  $\kappa_* \mathbf{B}= \dd{\tilde{x}_1} \wedge \dd{\tilde{x}_2}$ and
  $\kappa_*\mathbf{A} = \tilde x_1 \dd{\tilde x_2}$. Moreover, there
  exists $\lambda\geq 1$ and $a>0$ such that $\kappa$ and
  $\kappa^{-1}$ send $\S_{a'}$ to $\S_{\lambda a'}$ for all
  $a'\in(0,a/\lambda)$.
\end{lemma}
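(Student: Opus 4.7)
The plan is to separate the real analysis from the complex one: first compute the Jacobian of $\kappa$ directly and use its triangular structure to derive the diffeomorphism and pushforward statements, then upgrade to bi-Lipschitz using the quantitative bounds from Assumptions~\ref{as.ii}--\ref{as.iii}, and finally extend $\kappa$ and $\kappa^{-1}$ to complex strips of controlled width.

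Differentiating,
\[
  d\kappa = \begin{pmatrix} B & \int_0^{x_1}\partial_{x_2}B\,dx' \\ 0 & 1 \end{pmatrix},\qquad \det d\kappa = B,
\]
which is strictly positive by~\ref{as.i}. Since $\tilde x_2=x_2$ and, for each fixed $x_2$, the map $x_1\mapsto\tilde x_1$ is strictly increasing from $-\infty$ to $+\infty$ thanks to $B\geq b_0>0$, the map $\kappa$ is a global analytic bijection. The Darboux identity is immediate: $d\tilde x_1\wedge d\tilde x_2 = B\,dx_1\wedge dx_2 = \mathbf{B}$; and since the gauge choice~\eqref{eq.gauge} gives $A_2(x_1,x_2)=\tilde x_1$, we also have $\mathbf{A}=A_2\,dx_2=\tilde x_1\,d\tilde x_2$. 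For bi-Lipschitz, \ref{as.ii} bounds $B$ from above and \ref{as.iii} bounds the off-diagonal entry of $d\kappa$, so $d\kappa$ is uniformly bounded on $\R^2$; the inverse matrix has the same bounded entries divided by $B\geq b_0>0$, hence is uniformly bounded as well.

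The main obstacle will be the complex extension with uniform control of the strip width. Both $B$ and $A_2$ extend holomorphically and boundedly to $\S_a$ by~\ref{as.ii}--\ref{as.iii}, so $\kappa$ extends holomorphically to $\S_a$. To estimate $|\Im\tilde x_1|$ when $(x_1,x_2)=(u_1+iv_1,u_2+iv_2)$ with $|v_j|\leq a'$, I would deform the contour defining $\tilde x_1$ into a horizontal segment $[0,u_1]$ followed by a vertical one $[u_1,u_1+iv_1]$, and further split $B(x',u_2+iv_2)=B(x',u_2)+i\int_0^{v_2}\partial_{x_2}B(x',u_2+is)\,ds$. Then $\Im\tilde x_1$ becomes the sum of a term bounded by $a'$ times the supremum in~\ref{as.iii} and a term bounded by $a'\|B\|_\infty$, so $\kappa(\S_{a'})\subset\S_{\lambda a'}$ for some $\lambda\geq 1$ depending only on $B$. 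For $\kappa^{-1}$, I would apply the holomorphic implicit function theorem to $\int_0^{x_1}B(x',\tilde x_2)\,dx'=\tilde x_1$: the lower bound $B\geq b_0$ on $\R^2$, together with the uniform holomorphic bounds on $\S_a$ and Cauchy estimates on the derivatives of $B$, yield a uniform neighborhood of the real axis on which $\partial_{x_1}$ of the left-hand side stays bounded below in modulus, producing a holomorphic inverse on a comparable strip; shrinking $a$ a finite number of times then gives the claim.
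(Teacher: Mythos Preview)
Your overall strategy coincides with the paper's: compute the Jacobian, read off bi-Lipschitz and the Darboux/gauge identities on $\R^2$, then extend to complex strips using the boundedness hypotheses. One notational slip: the matrix you display is $d(\kappa^{-1})$, not $d\kappa$, since in the paper's convention $\kappa$ sends $(\tilde x_1,\tilde x_2)$ to $(x_1,x_2)$. This is harmless for the real-variable part, which is correct. Your contour-deformation bound on $|\Im\tilde x_1|$ is also correct, and in fact more explicit than what the paper writes.

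The genuine gap is in the last step. Invoking the holomorphic implicit (or inverse) function theorem with uniform constants gives you only \emph{local} inverses, one in a ball of uniform radius around each real point. You have not explained why these local inverses patch to a single holomorphic map on a full strip $\S_{a'}$; equivalently, you have not shown that the explicit map $(x_1,x_2)\mapsto(\tilde x_1,\tilde x_2)$ is \emph{globally injective} on $\S_{a'}$ for $a'$ small. The paper addresses this directly: if two points $x,y\in\S_{a'}$ have the same image, then $x_2=y_2$ and $\int_{x_1}^{y_1}B=0$; taking the real part and using $\Re B\geq b_0/2$ on a thin enough strip yields $|\Re(x_1-y_1)|\leq C(a')^2$, which for small $a'$ forces $x,y$ into the same local-injectivity ball, hence $x=y$. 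Without this (or an equivalent analytic-continuation argument showing the local inverses agree on overlaps because they all extend the same real diffeomorphism), the phrase ``producing a holomorphic inverse on a comparable strip'' is not justified.
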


It will be useful to let
\[
\alpha(x_1,x_2)=  \int_0^{x_1} \partial_{x_2} B(u,x_2)d x_1.
\]

\begin{proof}
$\kappa$ is a global diffeomorphism of $\R^2$ because $B$ is positive, and $\kappa^{-1}$ is well defined on $\mathcal{S}_{a}$. Next, there is a $C>0$ such that $|B|\leq C$ and $|\alpha|\leq C$ , so that $\kappa^{-1}$ maps $\mathcal{S}_{a'}$ into $\mathcal{S}_{2Ca'}$ for $0< a'< a$.

We can compute
\[
d_x(\kappa^{-1}) = \begin{pmatrix}
B(x) & \alpha(x) \\ 0 & 1
\end{pmatrix}.
\]
In particular,
\[
(d_x(\kappa^{-1}))^{-1} = \begin{pmatrix}
\frac{1}{B(x)} & - \frac{\alpha(x)}{B(x)}  \\ 0 & 1
\end{pmatrix}.
\]
Since $B\geq b_0>0$ on $\R^2$, and using Assumption~\ref{as.ii}, there
exists $0< a_0 < a$ such that
$|B|^{-1} \leq (\Re B)^{-1} \leq 1/(2b_0)$ on $\mathcal{S}_{a_0}$. In
particular, on $\mathcal{S}_{a_0}$, $(d_x(\kappa^{-1}))^{-1}$ is
bounded.

Around each real point $x$, we can apply the holomorphic local
inversion theorem and deduce that there are $\epsilon_x,\epsilon'_x>0$
such that $\kappa^{-1}$ is a biholomorphism between the ball of radius
$\epsilon_x$ centered at $x$ and its image, which contains the ball of
radius $\epsilon'_x$ around $\kappa^{-1}(x)$. One can give lower
bounds to the constants $\epsilon_x$, $\epsilon_x'$, expressed only in
terms of the $C^2$ norms of $\kappa^{-1}$, and an upper bound on
$(d_x(\kappa^{-1}))^{-1}$. In particular, we can choose them
independent of $x$.

Additionally, if $\kappa^{-1}(x)=\kappa^{-1}(y)$ for some
$x,y\in \mathcal{S}_{a'}$ with $0<a'<a_1$, then $x_2=y_2$, and
$\int_{x_1}^{y_1}B = 0$. Observe that
\[
0=\Re \int_{x_1}^{y_1}B = \int_{\Re x_1}^{\Re y_1} \Re B(t + i \Im x_1, x_2) \dd t - \int_{\Im x_1}^{\Im y_1} \Im B(\Re y_1 + i t, x_2) \dd t.
\]
Since $\Re B \geq b_0/2$ on $\mathcal{S}_{a_1}$, we deduce that $|\Re (x_1 - y_1)| \leq C (a')^2$ for some $C>0$. In particular, according to the argument above, if $a'$ is small enough, this implies that $x_1 = y_1$. For such an $a'>0$, $\kappa^{-1}$ is a biholomorphism between $\mathcal{S}_{a'}$ and $\kappa^{-1}(\mathcal{S}_{a'})$, which satisfies
\[
\mathcal{S}_{a''} \subset \kappa^{-1}(\mathcal{S}_{a'}) \subset \mathcal{S}_{C a'},
\]
for some $a''>0$. Further, $\kappa^{-1}$ is uniformly Lipschitz, and so is its inverse. Taking $\min(a', a'')$ as the new value of $a$ and $\lambda$ the Lipschitz constant of $\kappa$, $\kappa^{-1}$ ends the proof.
\end{proof}

We can associate $\kappa$ with a unitary operator $U_\kappa$ by setting
\[
U_\kappa f(y) = \textup{Jac}(\kappa)^{1/2}\, f(\kappa(y)).
\]
According to Lemma \ref{lem.A} and keeping the same notation, we have
\begin{equation}\label{eq.symbafter0}
U_\kappa\Lh U_\kappa^{-1}=(-i h\nabla_y-\tilde{\mathbf{A}})g^*(-i h\nabla_y-\tilde{\mathbf{A}})-h^2V\,.
\end{equation}
Here, $V$ is explicit in terms of $\kappa$, and $g^\ast$ is the dual Riemannian metric $(\dd\kappa^T \dd\kappa)^{-1}$. Note also that
\begin{equation}\label{eq.symbafter}
(-i h\nabla_y-\tilde{\mathbf{A}})g^*(-i h\nabla_y-\tilde{\mathbf{A}})=\Op(g^*(\eta-\widetilde{\mathbf{A}}(y),\eta-\widetilde{\mathbf{A}}(y)))+\O(h^2)\,,
\end{equation}
where the $\O(h^2)$ comes from the explicit computation of the subprincipal term with the composition formula (the operator in the right hand side is symmetric). 

From explicit expressions for the remainders, we deduce that 
\begin{equation}\label{eq.symbafter1}
U_\kappa\Lh U_\kappa^{-1}=\Op(\|(\xi_1,\xi_2-x_1)\|^2_{g^*}+\O(h^2))\,,
\end{equation}
where the remainder symbol is of the form $h^2 q_1$, $q_1$ holomorphic and bounded on some $\mathcal{S}_{a}$ with $a>0$. Moreover, letting $\tilde{B} = B\circ \kappa$ and $\tilde{\alpha}=\alpha\circ \kappa$, we get
\begin{equation}\label{eq.g*}
\|(\xi_1,\xi_2-x_1)\|^2_{g^*}=\tilde{B}^2\xi_1^2+(\xi_2-x_1+\tilde{\alpha}\xi_1)^2\,.
\end{equation}
We are now almost in the desired form. We consider the following symplectomorphism
\[ 
\kappa_{\M}(x, \xi) = (x + A\xi, \xi)\,, \qquad \text{ where }\quad A =
  \begin{pmatrix}
    0 & 1 \\ 1 & 0
  \end{pmatrix}\,,\quad A^{-1} = A\,.
\]
It is associated with the metaplectic operator $\mathcal{M}$, defined as
\begin{equation}\label{eq:M}
  \M u(x_1,x_2):= \frac{1}{(2\pi h)^2}\int_{\R^4} e^{\frac{i}{h} \Phi(x,y,\xi)} u(y)\dd y \dd \xi\,,
\end{equation}
the phase being given by
\[\Phi(x,y,\xi)=\varphi(x,\xi)-\langle y,\xi\rangle\,,\quad \varphi(x,\xi)=\langle
x- \frac{1}{2} A\xi,\xi\rangle\,,\]
and $\varphi$ being the generating function of $\kappa_{\mathcal M}$. We observe that
\[\M u(x)=(2\pi h)^{-2}\int_{\R^2}e^{\frac{i}{h}\langle x,\xi\rangle}\F_h u(\xi)e^{-\frac{i}{2h}\langle A\xi,\xi\rangle}\dd\xi\,,\]
where we used the semiclassical Fourier transform
\[\F_h u(\xi) = \int_{\R^2} e^{-\frac{i}{h}\pscal{x}{\xi}} u(x) \dd x\,, \quad\F_h^{-1} v(x) = (2\pi h)^{-2}\int_{\R^2} e^{\frac{i}{h}\pscal{x}{\xi}} v(\xi) \dd x\,.\] 
Recalling that
\[ \F_h^{-1}(UV)=\F^{-1}_h(U)\star\F^{-1}_h(V)\,,\]
the operator $\mathcal{M}$ can be written as a convolution operator
\begin{equation}\label{equ:M=K*u}
  \M u = K\star u\,,\qquad K = \F_h^{-1} e^{-\frac{i}{2h}\pscal{A\xi}\xi} = \frac{1}{2\pi
  	h}e^{\frac{i}{2h}\pscal{A x}x}\,,
\end{equation}
where we used the well-known result about the Fourier transform of a quadratic exponential.

 For a symbol $\sigma$ in
$\mathcal{S}'(\R^{4})$ (which is surely the case of the symbols we are
manipulating so far, we have the exact \enquote{Egorov} correspondence
\begin{equation}\label{eq:metaplectic-transform-Weyl}
  \M^{-1}\Op(\sigma)\M = \Op( \sigma\circ\kappa_\mathcal{M})\,.
\end{equation}
It follows that $\M^{-1}U_\kappa \Lh U_\kappa^{-1}\M$ is in the form announced by Lemma \ref{lemma:normal-form-L_h}. It remains to check the conditions on the coefficients. We find that
\[
g^{11} = (\tilde{B}^2 + \tilde\alpha^2)\circ\kappa_\M,\ g^{12} = \tilde{\alpha}\circ \kappa_\M,\ g^{22} = 1.
\]
Then
\begin{equation}\label{equ:B}
B(x,\xi)= \sqrt{ g^{11}g^{22} - (g^{12})^2} =  B(\kappa(\kappa_\M(x,\xi))),
\end{equation}
is suitably non-degenerate according to assumption~\ref{as.i}, and it
remains to check that $\tilde{\alpha}\circ\kappa_\M$ is critical at
$0$. But this is true if $\alpha$ itself is critical at $0$, and this
holds since ($B$ being critical at $0$)
\[
\alpha = x_1^2 \partial_{x_2,x_1}^2 B(0)+\O(x^3).
\]

\qed

It is important to observe that since $\M$ somehow mixes $x$ and $\xi$ variables, it does not preserve exponential decay of functions. However, in a sense to be precised later, we will get decay in \enquote{$x$ and $\xi$}, which is preserved by $\M$. 

In the sequel, it will be convenient to let
\begin{equation}\label{eq:form-symbol-principal}
  p_\M := g^{11}\xi_1^2 + 2 g^{12} x_1\xi_1 + g^{22} x^2_1\,.
\end{equation}
Additionally, we will distinguish variables by setting
$X_1=(x_1,\xi_1)$, $X_2 = (x_2,\xi_2)$.

\subsection{Reduction to a bounded symbol}

Our strategy strongly relies on the presentation of the
Fourier-Bros-Iagolnitzer (in short, FBI) transform given in Martinez'
book~\cite{Martinez-book}. There, many results require that operators
have symbols in the class $S(1)$, which is the space of smooth
functions on phase space that are uniformly bounded, together with all
their derivatives. However, because the magnetic Laplacian is a
differential operator of positive order, its symbol does not belong to
that class. The statements we will use could probably be extended to
the general case of symbols with more general order functions. It is
to avoid this, and concentrate on the essential arguments, that we
have decided to restrict ourselves to the case of a bounded magnetic
field, a situation where we can reduce the problem to a problem in the
$S(1)$ class, as follows.

Initially, the symbol of magnetic Laplacian is polynomial in $\xi$ and
hence belongs to a class with gains of powers of
$\langle \xi \rangle$: locally in $x$,
\begin{equation}\label{eq:estimates-S^2}
  |\partial_x^\alpha \partial_\xi^\beta p_{\kappa}| \leq C_{\alpha,\beta} \langle \xi\rangle^{2 - |\beta|}.
\end{equation}
This still holds after the change of variables $\kappa$. However, the
metaplectic transform $\M$ mixes the $x$ and $\xi$ variables, so that
we do not gain powers of $\xi$ anymore. Recall that for a non-negative
function $m$ on $\R^d$, $S_{\R^d}(m)$ is defined as the set of
functions $\sigma$ that satisfy estimates
\[
  |\partial_x^\alpha \sigma | \leq C_\alpha m,\ \alpha \in \N^d.
\]
If $\sigma$ is holomorphic on a complex strip
$\R^d\subset \S\subset\C^d$, we shall say that $\sigma\in S_\S(m)$ if
\[
  \forall z\in \S,\qquad |\partial_x^\alpha \sigma(z) | \leq C_\alpha
  m(\Re z),\ \alpha \in \N^d.
\]
Also recall what it means for a non-vanishing smooth function $m$ on
$T^\ast \R^2 = \R^4$ to be an \emph{admissible order function}. First,
one requires that $m\in S_{\R^4}(m)$. Second, there is an $N>0$ such that for some
$C>0$ and any $(x,\xi),(x',\xi')\in T^\ast \R^2$,
\begin{equation}\label{eq:temperance}
  \frac{m(x,\xi)}{m(x',\xi')} \leq C \langle (x-x',\xi-\xi')\rangle^N.
\end{equation}
Given two admissible order functions $m$ and $m'$, then $1/m$ and
$mm'$ also are admissible and we have the following result (see for
instance~\cite[Proposition 7.7]{Dimassi-Sjostrand}): if
$\sigma \in S(m)$ and $\sigma'\in S(m')$, then
\begin{equation}\label{eq:product-Weyl}
  \Op(\sigma)\Op(\sigma') =
  \Op\left( \sigma \sigma' + \frac{h}{2i} \{ \sigma, \sigma'\} +
    \O_{S(mm')}(h^2) \right),
\end{equation}
with the usual sign convention
$\{ f,g\} = \partial_\xi f \partial_x g - \partial_x f \partial_\xi
g$. Following the result of Boutet de Monvel-Krée~\cite{BMK}, a
refinement of estimate~\eqref{eq:product-Weyl} shows that if
$\sigma, \sigma'$ had a holomorphic extension to a strip, with uniform
estimates, then the symbol of the product also does, with uniform
estimates. Consider now
\[
  m_{\M}(X_1,X_2)=1+p_{\M}(X_1,X_2)\,.
\]
\begin{lemma}
  Assume that $p_{\mathcal{M}}$ is in the form~\eqref{eq:form-symbol-principal}, with coefficients satisfying
  the conclusion of Lemma~\ref{lemma:normal-form-L_h}. Then $m_{\mathcal{M}}$ is an
  admissible order function, and $p_{\mathcal{M}} \in S_{\mathcal{S}_a}(m_{\mathcal{M}})$ for
  some $a>0$.
\end{lemma}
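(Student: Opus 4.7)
The plan is to reduce both the admissibility of $m_\M$ and the $S_{\S_a}(m_\M)$ membership of $p_\M$ to the single pointwise comparison
\[
m_\M(X_1,X_2) \asymp 1 + |X_1|^2,
\]
uniformly in $X_2$; from this, the three properties to verify (derivative bounds, temperance, and the holomorphic extension with estimates) will follow by elementary arguments.

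First I would establish this comparison by analyzing $p_\M$ as a quadratic form in $X_1 = (x_1,\xi_1)$ with matrix
\[
M(X_1,X_2) = \begin{pmatrix} g^{22} & g^{12} \\ g^{12} & g^{11} \end{pmatrix}.
\]
The upper bound $p_\M \leq C|X_1|^2$ is immediate because the $g^{ij}$ are bounded on $\R^4$. For the lower bound, Lemma~\ref{lemma:normal-form-L_h} gives $\det M = g^{11}g^{22} - (g^{12})^2 = B^2 \geq b_0^2 > 0$, while $\Tr M = 1 + g^{11}$ is bounded above since $g^{11}$ is bounded. Hence the smaller eigenvalue of $M$, equal to $2\det M/(\Tr M + \sqrt{(\Tr M)^2 - 4\det M}) \geq \det M/\Tr M$, is uniformly bounded below, yielding $p_\M \geq c |X_1|^2$ on $\R^4$ for some $c > 0$.

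Next I would derive the three desired estimates. For $m_\M \in S_{\R^4}(m_\M)$: since $p_\M$ is polynomial of total degree $2$ in $X_1$ whose coefficients have globally bounded derivatives of all orders (this follows from assumptions~\ref{as.ii} and~\ref{as.iii} by Cauchy's formula on a slightly shrunken strip, since $\tilde B$ and $\tilde\alpha$ are bounded holomorphic there by Lemma~\ref{lemma:darboux}), Leibniz yields $|\partial^\alpha p_\M| \leq C_\alpha(1 + |X_1|^2) \leq C_\alpha' m_\M$, which also gives $m_\M \in S_{\R^4}(m_\M)$. For temperance, the elementary inequality $1 + |X_1|^2 \leq 2(1 + |X_1'|^2)(1 + |X_1 - X_1'|^2)$ combined with the comparison above yields~\eqref{eq:temperance} with $N = 2$. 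Finally, for $p_\M \in S_{\S_a}(m_\M)$, the $g^{ij}$ extend holomorphically and boundedly to $\S_a$, and for $z \in \S_a$ one has $|X_1(z)|^2 \leq 2|\Re X_1(z)|^2 + 4a^2$, so the real-variable derivative estimates transfer to give $|\partial^\alpha p_\M(z)| \leq C_\alpha m_\M(\Re z)$ on $\S_a$.

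The only nontrivial step — and thus the main (albeit minor) obstacle — is the uniform positivity of $M$; this follows transparently from the positivity of $B^2 = \det M$ guaranteed by Lemma~\ref{lemma:normal-form-L_h} and the normalization $g^{22} = 1$ built into that normal form.
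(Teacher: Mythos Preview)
Your proof is correct and follows essentially the same route as the paper: both reduce everything to the comparison $m_\M \asymp 1+|X_1|^2$ on $\R^4$, and both obtain the crucial lower bound on the smaller eigenvalue of the quadratic form via $\det M/\Tr M \geq b_0^2/C$. The only cosmetic difference is that on the strip the paper estimates $\Re p_\M$ directly, whereas you use $|X_1(z)|^2 \leq |\Re X_1|^2 + O(a^2)$ to transfer the real-variable bounds; your version is in fact slightly slicker.
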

If these assumptions are satisfied, we will introduce a bounded
spectral parameter $\mu$ and work with
\begin{equation}\label{eq:def-P_h}
\begin{split}
  \mathscr{P}
  & = \Op\left(\frac{1}{1+p_{\mathcal{M}}} \right)\Op( p_{\mathscr{L}}
  - h\mu) \\
  & = \Op\left(\frac{p_{\mathcal{M}}-h\mu}{1 + p_{\mathcal{M}}} +
    \O_{S_{\mathcal{S}_a}(1)}(h^2)\right) = \Op( p_h),
\end{split}
\end{equation}
where $p_{\mathscr{L}} = p_\M + h^2q$, see~\eqref{eq:form-symbol}, 
so that $p_h \in S_{\mathcal{S}_a}(1)$, uniformly with respect to $h$
and $\mu$. 
\begin{remark}
Since the intensity of the magnetic field is
given by
\[
\sqrt{\det \textup{Hess}_{X_1} (p_h)_{|X_1=0}}+\O(h)
\]
(see Equation~\eqref{equ:B}) the fact that it is globally bounded is
actually necessary for obtaining $p_h \in S_{\mathcal{S}_a}(1)$.
\end{remark}
\begin{proof}
  Let us check that $p_{\mathcal{M}} \in S_{\mathcal{S}_a}(m_{\mathcal{M}})$. Since the
  coefficients $g^{i j}$ are in $S_{\mathcal{S}_a}(1)$ for some $a>0$,
  one finds that
  \[
    |\partial^\alpha p_{\M}| \leq C_\alpha(1 + |X_1|^2) \qquad \text{
      on } \mathcal{S}_a.
  \]
  Thus, it suffices to show that there exists $\lambda >0$ such
  that 
  \begin{equation}\label{eq:uniform-lower-bound}
    1 + \Re p_{\M} \geq \lambda (1+ |X_1|^2)\,.
  \end{equation}
  Let us start by
  proving this on $\R^4$.  Note that \eqref{eq:uniform-lower-bound} is satisfied for
  example if $\lambda \leq 1$ and everywhere
  \[
    \lambda \leq \frac{ g^{11} + g^{22} - \sqrt{ (g^{11}-g^{22})^2 + 4
        (g^{12})^2}}{2}\,.
  \]
  Let 
  \[C = \sup \{| g^{11}| +|g^{22}|\}\,,\quad 
  C' = \inf \{g^{11} g^{22} -
  (g^{12})^2\}\,.\]
  The quantity in the right hand side is larger than
  \[2 (g^{11}g^{22} - (g^{12})^2)/(g^{11}+g^{22}) \geq 2 C'/C > 0\,,\]
  uniformly on $T^\ast \R^2$. Now, we turn to the case that
  $(x,\xi) = (\Re x, \Re \xi) + i(u, v)$. Then, we can write
  \begin{align*}
    \begin{split}
      \Re p_{\M} & = \Re\left(g^{11}(\xi_1^2 - v_1^2) + 2g^{12}(\xi_1 x_1 - u_1 v_1) + g^{22}(x_1^2 - u_1^2) \right) \\
      &- 2 \Im \left(g^{11} \xi_1 v_1 + 2 g^{12}(\xi_1 u_1 + x_1 v_1)
        + g^{22}x_1 u_1\right)
    \end{split}\\
                 &\geq \lambda' (\Re X_1)^2 - C a^2(1+ |\Re X_1|),
  \end{align*}
  where $\lambda'$ may be smaller than the $\lambda$ from before, but
  is still non-negative if we assume that
  $\inf\{\Re g^{11}\Re g^{22}- (\Re g^{12})^2\}> 0$ on
  $\R^4 + i\interval{-a}a^4$. Up to taking $a$ small enough, this
  holds.

  Finally, we consider the temperance of the symbol. We already know
  that for some constants $C,C'$,
  \[
    \frac{1+p_\M(x,\xi)}{1+p_\M(x',\xi')} \leq \frac{ 1 + C (X_1')^2}{1
      + C' X_1^2}\,,
  \]
  whence we find
  \[
    \frac{1+p_\M(x,\xi)}{1+p_\M(x',\xi')} \leq \frac{C}{C'}(1+
    \lambda(X_1 - X_1')^2)\,,
  \]
  for $\lambda$ large enough.
\end{proof}

\section{About the FBI transform}\label{sec.3}
Our main tool in this section will be the Fourier-Bros-Iagolnitzer
(FBI) transform. Several versions exist in the literature,
see~\cite{Hitrik-Sjoestrand-mini-cours}; in this paper we
follow~\cite[Chapter 3]{Martinez-book}, and the FBI transform we use
here is defined, for $u\in \mathscr{S}'(\R^2)$, by
\[Tu(x,\xi)=\alpha_h\int_{\R^4}e^{i(x-y)\xi/h}e^{-|x-y|^2/2h}u(y)\dd
  y\,,\quad \alpha_h=2^{-1}(\pi h)^{-\frac 32}\,.\] The $\alpha_h$ is
chosen so that $T$ is isometric from $L^2(\R^2)$ to $L^2(\R^4)$. The
knowledge of $Tu$ implies the knowledge of $u$ via the inversion
formula\footnote{ sometimes called coherent state decomposition; in
  relation with the magnetic Laplacian, it has been used
  in~\cite[Section 2.3]{BHR16}}:
\begin{equation}\label{eq.uTu}
  u(y)=\alpha_h\int_{\R^4}e^{-i(x-y)\xi/h-|x-y|^2/2h}Tu(x,\xi)\dd x\dd \xi = T^\ast T u\,.
\end{equation}

It will be essential later that
\begin{equation}\label{eq:values-in-anti-holomorphic}
  (h(\partial_x -  i \partial_\xi)-  i\xi)  T = 0.
\end{equation}
In other words, $T$ maps $L^2(\R^2)$ into the closed subspace of
$L^2(\R^4)$ of functions of the form $e^{\frac{-\xi^2}{2h}}f(x-i\xi)$,
where $f$ is holomorphic on $\C^2$.

\subsection{Towards a Toeplitz representation}

Since the naive Agmon tactic fails, it seems natural to try and use
weights in phase space that depend on both $x$ and $\xi$. However, it
is not easy to understand the behavior of an operator of the type
$\Op(e^{\psi(x,\xi)/h})$, all the more if $\psi$ was not
bounded. (Although, in the case of a quadratic $\psi$, see the recent
article~\cite{Cob-Sjo-Hit18}). Following the strategy
of~\cite[3.5]{Martinez-book},~\cite{Nakamura95}, we use the FBI
transform to simplify this, as $e^{\psi(x,\xi)/h}$ can be seen as an
multiplication operator on $L^2(\R^4)$. Precisely, let us consider the
following quantity
\begin{equation}\label{eq:quadratic}
  \langle m e^{\psi/h} T\mathscr{P}u,
  e^{\psi/h}Tu\rangle_{L^2(\R^4)}\,,
\end{equation}
where $\mathscr{P}$ is defined in~\eqref{eq:def-P_h}, and $m\in S(1)$
is multiplier (it is not an order function!).  In this section,
$\psi\in S(1)$ and might depend on parameters uniformly with respect
to the $S(1)$-topology, and all the bounds will depend on $\dd\psi$
only.

Since the FBI transform we are using has a quadratic phase, we have an
exact formula
\[
  T\Op(\sigma)=\Op(\sigma_T) T\,,
\]
where $\sigma_T(x,\xi,x^*,\xi^*)=\sigma(x-\xi^*,x^*)$, valid for
$\sigma \in \mathscr{S}'(\R^4)$. From this, we get
\[
  \langle m e^{\psi/h}
  T\mathscr{P}u,e^{\psi/h}Tu\rangle_{L^2(\R^4)}=\langle m e^{\psi/h}
  \mathscr{P}_T Tu,e^{\psi/h}Tu\rangle_{L^2(\R^4)}\,.
\]
We set
\[
  \mathscr{P}^{\psi}_T=e^{\psi/h}\mathscr{P}_{T}e^{-\psi/h}\,,\qquad
  u_\psi=e^{\psi/h}Tu = T_\psi u\,,
\]
so that
\begin{equation*}\label{eq.reformulation}
  \eqref{eq:quadratic}  =\langle m\mathscr{P}^\psi_T u_\psi,u_\psi\rangle_{L^2(\R^4)}\,.
\end{equation*}
Thanks to our analyticity assumption and~\cite[Corollary
5]{Nakamura95} or~\cite[Lemma 3.5.4]{Martinez-book},
$\mathscr{P}^{\psi}_T$ is still a pseudo-differential operator with
symbol in $S(1)$. Its symbol satisfies
\begin{equation}\label{eq:symbol-p-psi}
  p^\psi_h=p_h(x-\xi^*-i\partial_\xi\psi,x^*+i\partial_x\psi)+\O(h^2)\,.
\end{equation}
Since we use the Weyl quantization, we have indeed $\O(h^2)$
and not only $\O(h)$.  Now, we apply~\cite[Theorem
1]{Nakamura95} or~\cite[Theorem 3.5.1]{Martinez-book}, which gives
\begin{equation}\label{eq.q}
  \langle T\mathscr{P}u,me^{2\psi/h}Tu\rangle_{L^2(\R^4)}=\int_{\R^4}p^{\psi}_{h,m}(x,\xi ;h)|u_\psi|^2\dd x\dd\xi+\O(h^2)\|u_\psi\|^2\,,
\end{equation}
with
\begin{equation}
  p^{\psi}_{h,m}(x,\xi ;h):=
  m(x,\xi)p_h(x+2\partial_{\overline z}\psi,\xi-2i\partial_{\overline{z}}\psi)+\O(h)\,.
\end{equation}
Here, we have introduced the complex variable $z = x +i \xi$, and
\[
  \partial_{z} = \frac{1}{2}(\partial_x - i \partial_\xi),\quad
  \partial_{\overline{z}} = \frac{1}{2}( \partial_x + i \partial_\xi).
\]
We stress again that the all the constants in the estimates only
involve $\psi$ via semi-norms of $\dd\psi$ in $S(1)$.

\subsection{Subprincipal term}
In fact, we can even describe the term estimated by $\O(h)$
and we will actually need it. For that purpose, and also for the
convenience of the reader, let us revisit and refine~\cite[Theorem
3.5.1]{Martinez-book}.

\subsubsection{General expression of the subprincipal term}
Let us focus on the proof of~\eqref{eq.q} once we
have~\eqref{eq:symbol-p-psi}. The following proposition shows how to
explicitly write a pseudo-differential operator acting on the range of
$T_\psi$ (in the sense of quadratic forms) as a multiplication
operator modulo $\O(h^2)$.
\begin{proposition}\label{prop.WtoB}
  Consider a symbol $q=q_0(x,\xi, x^*,\xi^*)\in S_{\R^8}(1)$. We have
  \[
    \langle \Op (q)\,
    u_\psi,u_\psi\rangle_{L^2(\R^4)}=\int_{\R^4}(\tilde{q}_0(x,\xi) +
    h\tilde q_1(x,\xi))|u_\psi|^2\dd
    x\dd\xi+\O(h^2)\|u_\psi\|^2_{L^2(\R^4)}\,,
  \]
  where
  \[
    \tilde{q}_0(x,\xi)=q_0(x,\xi,\xi-\partial_\xi\psi,\partial_x\psi)\,,\quad
    \tilde{q}_1(x,\xi)=\frac{1}{2}\left(\{\sigma_f,g\} +
      \{f,\sigma_g\}\right)_{f=g=0}\,,
  \]
  with
  \begin{equation}\label{eq.fg}
    \begin{split}
      f&=x^*-\xi+\partial_\xi\psi\\
      g&=\xi^*-\partial_x\psi\\
      \sigma_f&=\int_0^1
      \partial_{x^*}q_0(x,\xi,\xi-\partial_\xi\psi+t f,\xi^*)\dd t\\
      \sigma_g&=\int_0^1\partial_{\xi^*}q_0(x,\xi,\xi-\partial_\xi\psi,\partial_x\psi+t g)\dd
      t\,.
    \end{split}
  \end{equation}
\end{proposition}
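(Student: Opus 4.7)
The plan is to reduce $\langle \Op(q_0) u_\psi, u_\psi\rangle$ to a multiplication operator modulo $\O(h^2)$ by systematically exploiting the equation characterizing the range of $T_\psi$. First, applying Taylor's formula with integral remainder to $q_0(x,\xi,x^*,\xi^*)$ in the variables $(x^*,\xi^*)$ --- first around $\xi^* = \partial_x\psi$, then around $x^* = \xi - \partial_\xi\psi$ --- yields the exact identity
\[
q_0 = \tilde q_0 + g\,\sigma_g + f\,\sigma_f,
\]
with notation as in the statement. The choice of base point $(x^*,\xi^*) = (\xi - \partial_\xi\psi,\partial_x\psi)$, which makes $f = g = 0$, will be crucial. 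Indeed, conjugating the identity~\eqref{eq:values-in-anti-holomorphic} by $e^{\psi/h}$ and rewriting in terms of $hD_x,hD_\xi$ produces the pointwise relation
\[
\Op(f)\,u_\psi = i\,\Op(g)\,u_\psi.
\]
Since $f$ and $g$ are real-valued, $\Op(f)$ and $\Op(g)$ are self-adjoint; combined with the previous display, this forces $\langle \Op(f) u_\psi, u_\psi\rangle = \langle \Op(g) u_\psi, u_\psi\rangle = 0$.

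Now, the contribution of $\Op(\tilde q_0)$ to $\langle \Op(q_0) u_\psi, u_\psi\rangle$ is simply $\int \tilde q_0\,|u_\psi|^2 \dd x\dd\xi$, since $\tilde q_0$ depends only on $(x,\xi)$. For the term $\Op(f\sigma_f)$, I would apply the symmetrized Moyal formula
\[
\Op(f\sigma_f) = \tfrac{1}{2}\bigl(\Op(f)\Op(\sigma_f) + \Op(\sigma_f)\Op(f)\bigr) + \O_{S(1)}(h^2),
\]
replace $\Op(f)u_\psi$ by $i\Op(g)u_\psi$ on both factors, and carefully track the antilinearity of the inner product in its second argument. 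This converts $\langle \Op(f\sigma_f)u_\psi, u_\psi\rangle$ into $\tfrac{i}{2}\langle [\Op(\sigma_f),\Op(g)] u_\psi, u_\psi\rangle + \O(h^2)$. The commutator has principal symbol $\tfrac{h}{i}\{\sigma_f,g\}$; applying the already-proven leading-order version of the proposition to the new symbol $\{\sigma_f,g\}\in S(1)$ and evaluating at $f=g=0$ produces the contribution $\tfrac{h}{2}\int \{\sigma_f,g\}|_{f=g=0}\,|u_\psi|^2\dd x\dd\xi$. An entirely parallel computation on $\Op(g\sigma_g)$, this time using the dual identity $\Op(g)u_\psi = -i\Op(f)u_\psi$, yields $\tfrac{h}{2}\int \{f,\sigma_g\}|_{f=g=0}\,|u_\psi|^2\dd x\dd\xi$. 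Summing the two contributions reproduces the claimed formula for $\tilde q_1$.

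The main technical obstacle is not the algebraic identification of $\tilde q_1$ but the uniform control of the $\O(h^2)$ remainders accumulated along the successive Moyal expansions. One needs to verify that the intermediate symbols $f$, $g$, $\sigma_f$, $\sigma_g$, together with all their relevant Poisson brackets, lie in $S(1)$ uniformly in $\psi$ with bounds depending only on finitely many seminorms of $\dd\psi$, and then invoke the Calderón--Vaillancourt $L^2$-boundedness theorem at each step. This is precisely the reason why the constants in the estimate depend on $\dd\psi$ alone, as asserted in the statement.
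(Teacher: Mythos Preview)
Your proposal is correct and follows essentially the same route as the paper: Taylor-decompose $q_0=\tilde q_0+f\sigma_f+g\sigma_g$, use the Weyl symmetrization $\Op(f\sigma_f)=\tfrac12(F\Op(\sigma_f)+\Op(\sigma_f)F)+\O(h^2)$, apply the range relation $Fu_\psi=iGu_\psi$ to convert to the commutator $\tfrac{i}{2}[\Op(\sigma_f),G]$, and iterate. One technical correction: $f$ and $g$ themselves are \emph{not} in $S(1)$ (they grow linearly in $x^*,\xi^*$); what makes the Moyal remainders $\O(h^2)$ is that all their \emph{derivatives} are in $S(1)$, which is exactly the point the paper flags in its parenthetical remark.
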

\begin{proof}
  Let us follow the presentation by Martinez.  The computations also
  appear in~\cite{Nakamura95}. We consider
  \[
    r_1(x,\xi,x^*,\xi^*)=q(x,\xi,x^*,\xi^*)-q(x,\xi,\xi-\partial_\xi\psi,\partial_x\psi)\,,
  \]
  By the Taylor formula,
  \[
    r_1=f\sigma_f+g\sigma_g\,.
  \]
  We set $F=\Op f$ and $G=\Op g$. Since we use the Weyl quantization,
  we have
  \[
    \Op( f \sigma_f ) = \frac{1}{2}(F\Op(\sigma_f) + \Op(\sigma_f) F)
    + \O(h^2)\,.
  \]
  (Here, the symbol $f$ is not in $S(1)$, however all its derivatives
  are, which is essential in the computation). Next, we observe that
  Equation~\eqref{eq:values-in-anti-holomorphic} implies that
  $FT_\psi=i G T_\psi$ and deduce
  \[
    \frac{1}{2}\langle (F\Op(\sigma)+\Op(\sigma)F)u_\psi,u_\psi\rangle
    = \frac{i}{2}\langle [ \Op(\sigma),G]u_\psi,u_\psi\rangle\,.
  \]
  Thus (again, since $\dd g \in S(1)$)
  \[
    \langle \Op( f \sigma_f ) u_\psi, u_\psi\rangle = \frac{h}{2}
    \langle \Op(\{\sigma_f, g\}) u_\psi, u_\psi \rangle +
    \O(h^2)\|u_\psi\|^2\,.
  \]
  In the same way, we get
  \[
    \langle \Op( g \sigma_g ) u_\psi, u_\psi\rangle = \frac{h}{2}
    \langle \Op(\{f, \sigma_g\}) u_\psi, u_\psi \rangle +
    \O(h^2)\|u_\psi\|^2\,.
  \]
  Therefore, iterating the argument,
  \[
    \langle\Op r_1
    u_\psi,u_\psi\rangle=\frac{h}{2}\int_{\R^4}\left(\{\sigma_f,g\}+\{f,\sigma_g\}\right)_{f=g=0}|u_\psi|^2\dd
    x\dd\xi+\O(h^2)\|u_\psi\|^2\,.
  \]
\end{proof}

\begin{notation}\label{notation.hat}
  When $a\in S_{\R^4}(1)$, we let
  \[\widehat a(x,\xi)=a(x+2\partial_{\overline
      z}\psi,\xi-2i\partial_{\overline{z}}\psi)\,.\]
\end{notation}

\begin{corollary}\label{cor.multiplication}
  We have
  \begin{equation}
    \langle T\mathscr{P}u,me^{2\psi/h}Tu\rangle_{L^2(\R^4)}=\int_{\R^4}p^{\psi}_{h,m}(x,\xi ;h)|u_\psi|^2\dd x\dd\xi+\O(h^2)\|u_\psi\|^2\,,
  \end{equation}
  where
  \begin{equation}
    p^{\psi}_{h,m}(x,\xi ;h)=m(x,\xi)\widehat{p}_h(x,\xi)+h p^{\psi}_{h,m,1}\,,
  \end{equation}
  and
  \[
    p^{\psi}_{h,m,1}= - 2 \partial_{\overline{z}}m
    \widehat{\partial_{z} p}_h+ m s(x,\xi)\,,
  \]
  with
  \[
    s(x,\xi)=\frac{1}{2}\left(\{\sigma_f,g\}+\{f,\sigma_g\}\right)_{|f=g=0}\,,
  \]
  where we used the notations of Proposition~\ref{prop.WtoB} with
  \[
    q_0(x,\xi,x^*,\xi^*)=
    p_h(x-\xi^*-i\partial_\xi\psi,x^*+i\partial_x\psi)\,.
  \]
\end{corollary}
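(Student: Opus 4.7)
The strategy is to reduce the quadratic form to a multiplication operator on the FBI side and apply Proposition~\ref{prop.WtoB}.

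First, using the exact intertwining $T\Op(\sigma)=\Op(\sigma_T)T$ and the substitution $u_\psi=e^{\psi/h}Tu$, I rewrite the left-hand side as
\[
\langle T\mathscr{P}u,\,me^{2\psi/h}Tu\rangle_{L^2(\R^4)}=\langle m\,\Op(p_h^\psi)u_\psi,u_\psi\rangle_{L^2(\R^4)},
\]
with $p_h^\psi$ as in~\eqref{eq:symbol-p-psi}. The Weyl composition formula~\eqref{eq:product-Weyl}, extended to the holomorphic symbols in $S_{\mathcal{S}_a}(1)$ we are using, absorbs the multiplier $m$ into a single symbol
\[
q_h = m\,p_h^\psi + \frac{h}{2i}\{m,p_h^\psi\} + \O_{S_{\R^8}(1)}(h^2),
\]
and since $m$ is independent of $(x^*,\xi^*)$, the Poisson bracket effectively acts only on the $(x,\xi)$-variables.

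Next, I apply Proposition~\ref{prop.WtoB} to the full symbol $q_h$, treating the $h$-expansion as explicit perturbations of the leading part $m p_h^\psi$. The principal term $\tilde q_0$ is the plug-in of $m p_h^\psi$ at $(x^*,\xi^*)=(\xi-\partial_\xi\psi,\partial_x\psi)$; a chain-rule computation through the complex arguments of $p_h$ inside $p_h^\psi$ identifies this plug-in with $m(x,\xi)\,\hat p_h(x,\xi)$ in the sense of Notation~\ref{notation.hat}, giving the principal part of $p_{h,m}^\psi$.

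The subprincipal part $p_{h,m,1}^\psi$ receives two contributions. The first is the plug-in of the composition correction $\frac{1}{2i}\{m,p_h^\psi\}$. The second is the $\tilde q_1$ produced by Proposition~\ref{prop.WtoB} applied to $m p_h^\psi$: since $m$ is independent of $(x^*,\xi^*)$, the auxiliary functions in~\eqref{eq.fg} factor as $\sigma_f=m\sigma_f^{(p)}$ and $\sigma_g=m\sigma_g^{(p)}$, where the superscript $(p)$ refers to the analogous quantities built from $p_h^\psi$ alone; by the Leibniz rule, the brackets $\{\sigma_f,g\}$ and $\{f,\sigma_g\}$ split into $m\{\sigma_f^{(p)},g\}+\{m,g\}\sigma_f^{(p)}$ (and similarly for the other bracket), producing the clean term $m\,s(x,\xi)$ from the $m$-factor and derivative-of-$m$ cross terms from the remainder.

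The heart of the argument is then to combine these derivative-of-$m$ cross terms with the plug-in of $\frac{1}{2i}\{m,p_h^\psi\}$ and to show that the total rearranges into $-2\partial_{\overline z}m\cdot\widehat{\partial_z p_h}$. Writing everything via Wirtinger derivatives using $\partial_x=\partial_z+\partial_{\overline z}$ and $\partial_\xi=i(\partial_z-\partial_{\overline z})$, and using the chain rule to relate the $(x^*,\xi^*)$-derivatives of $p_h^\psi$ to the $(x,\xi)$-derivatives at the plug-in, one finds that the terms involving second derivatives of $\psi$ cancel between the two contributions, leaving an expression linear in first derivatives of $m$ and of $p_h$. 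The main obstacle is precisely this bookkeeping: the cancellation relies on the symmetry of the mixed partials of $\psi$ together with the antisymmetry of the Poisson bracket, and identifying the surviving combination as $-2\partial_{\overline z}m\cdot\widehat{\partial_z p_h}$ is the delicate algebraic point.
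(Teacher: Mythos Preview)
Your approach is essentially the paper's: compose $m$ with $\mathscr{P}^\psi_T$ via the Weyl product, apply Proposition~\ref{prop.WtoB} to the resulting symbol, split the bracket terms by Leibniz (using that $m$ is independent of $(x^*,\xi^*)$ so that $\sigma_f=m\sigma_f^{(p)}$, $\sigma_g=m\sigma_g^{(p)}$), and regroup the derivative-of-$m$ cross terms into the Wirtinger form.

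The one place where your description overshoots is the final algebra. You anticipate a cancellation of second derivatives of $\psi$ between the composition correction and the Leibniz cross terms, but in fact no second derivatives of $\psi$ appear in either contribution. Since $m$ depends only on $(x,\xi)$, the bracket $\{m,p_h^\psi\}$ collapses to $-\partial_x m\cdot\partial_{x^*}p_h^\psi-\partial_\xi m\cdot\partial_{\xi^*}p_h^\psi$, and the $(x^*,\xi^*)$-derivatives of $p_h^\psi$ hit only the explicit $x^*,\xi^*$ in the arguments of $p_h$, not $\psi$. Likewise, $\{m,g_k\}=-\partial_{\xi_k}m$ and $\{f_k,m\}=\partial_{x_k}m$ directly, with no $\psi$-Hessian terms. (Second derivatives of $\psi$ do occur inside $s$ itself, but the corollary keeps $s$ untouched.) The surviving combination is then the elementary identity
\[
\frac{i}{2}\bigl(\partial_x m\cdot\widehat{\partial_\xi p}_h-\partial_\xi m\cdot\widehat{\partial_x p}_h\bigr)
-\frac{1}{2}\bigl(\partial_\xi m\cdot\widehat{\partial_\xi p}_h+\partial_x m\cdot\widehat{\partial_x p}_h\bigr)
=-2\,\partial_{\overline z}m\cdot\widehat{\partial_z p}_h,
\]
which follows immediately upon passing to $(\partial_z,\partial_{\overline z})$. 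So the ``delicate algebraic point'' you flag is in fact routine.
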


\begin{proof}
  We apply Proposition~\ref{prop.WtoB} to the pseudo-differential
  operator $\Op q=m\mathscr{P}^\psi_T$. By the composition formula,
  \begin{align*}
    q(x,\xi,x^*,\xi^*)
    & = m q_0(x,\xi,x^*,\xi^*)+hq_1(x,\xi,x^*,\xi^*)+\O(h^2)\,,\\
    \intertext{where}
    q_1(x,\xi,x^*,\xi^*)
    & = (2i)^{-1}\{m(x,\xi),
      p_h(x-\xi^*-i\partial_\xi\psi, x^*+i\partial_x\psi)\}\\
    & = -(2i)^{-1}\partial_{x} m \cdot\partial_\xi p_h +
      (2i)^{-1}\partial_\xi m \cdot \partial_x p_h\,.
  \end{align*}
  We deduce that
  \[
    p^\psi_{h,m} = m(x,\xi)\widehat{p}_h(x,\xi)+
    \frac{i h}{2}\left(\partial_x m \cdot \widehat{\partial_\xi p}_h -
      \partial_\xi m \cdot \widehat{\partial_x p}_h\right)+h s(x,\xi)\,,
  \]
  with
  \begin{align*}
    s 	&= \frac{1}{2}( \{ m\sigma_f, g\} + \{ f , m \sigma_g\} )_{|f=g=0}\\
	&=  \frac{m}{2}(\{\sigma_f,g\} + \{f, \sigma_g\})_{|f=g=0} + \frac{1}{2}( -\widehat{\partial_\xi p}_h \cdot \partial_\xi m - \widehat{\partial_x p}_h \cdot \partial_x m).
  \end{align*}
  But we have
  \begin{align*}
    \frac{i}{2}\left(\partial_x m \cdot \widehat{\partial_\xi p}_h
    - \partial_\xi m \cdot \widehat{\partial_x p}_h\right)
    & + \frac{1}{2}( -\widehat{\partial_\xi p}_h \cdot \partial_\xi m - \widehat{\partial_x p}_h \cdot \partial_x m) \\
    &= \frac{1}{2}\partial_x m \cdot
      \widehat{( -\partial_x p_h  + i \partial_\xi p_h)} +
      \frac{1}{2}\partial_\xi m \cdot
      \widehat{(- i \partial_x p_h - \partial_\xi p_h )}\\
    &= - 2 \partial_{\overline{z}}m \cdot \widehat{\partial_z p}_h.
  \end{align*}
\end{proof}

\subsubsection{Rough estimate of the subprincipal terms}
Let us describe $p_{h,m,1}^\psi$ in the case when $m=m(X_2)\in
S(1)$. Recall that
\[
  p_h = \frac{g^{11}\xi_1^2 - 2 g^{12}\xi_1 x_1 + g^{22}x_1^2 - h \mu
  }{1 + g^{11}\xi_1^2 - 2 g^{12}\xi_1 x_1 + g^{22}x_1^2} + \O(h^2),
\]
where the coefficients $g^{i j}$ are in $S(1)$ on
$\R^4+ i\interval{-a}a^4$, and $\mu\geq 0$. Then we notice that, since $m $ only depends on $z_2$,
\[
  |\partial_{\overline{z}} m \cdot \widehat{\partial_{z} p}_h | =
  \abs{(\partial_{\overline{z_2}} m) \widehat{\partial_{z_2} p}_h}
  \leq C(\min(|X_1|^2,1) + h^2)\,,
\]
and that this term is zero when $m=1$. Also, we observe that a priori,
$s \in S(1)$, so that
\[
  p^{\psi}_{h,m} = m \widehat{p}_h + h m\O(1) + h
  \O(\min(|X_1|^2,1)) + \O(h^3)\,.
\]

\subsubsection{A more accurate description}
When $\psi=\Psi(X_2)$, we can give a more explicit expression for
$s$. It will be convenient to set
\begin{equation}\label{eq.w}
  w(x,\xi,f,g):=p_h(x-2\partial_{\overline z}\psi-g,\xi+2i\partial_{\overline{z}}\psi+f)\,,
\end{equation} 
Then,
\[
  \sigma_f=\int_0^1 \partial_f w(x,\xi, t f,g)\dd t\,,\quad
  \sigma_g=\int_0^1 \partial_g w(x,\xi,0,t g)\dd t\,.
\]
We have
\[
\begin{split}
  \{\sigma_{f},g\}_{f=g=0} &=\sum_{k,j}\{\xi_j,g_k\}\partial_{f_k}\partial_{\xi_j}w(x,\xi,0,0)\\
  			&+\frac{1}{2}\{f_j,g_k\}\partial_{f_k}\partial_{f_j}w(x,\xi,0,0)+\{g_j,g_k\}\partial_{f_k}\partial_{g_j}w(x,\xi,0,0)\,,
\end{split}
\]
and
\[
  \{f,\sigma_{g}\}_{f=g=0}
  =\sum_{k,j}\{f_k,x_j\}\partial_{g_k}\partial_{x_j}w(x,\xi,0,0)+\frac{1}{2}\{f_k,g_j\}\partial_{g_k}\partial_{g_j}w(x,\xi,0,0)\,.
\]
From the expressions of $f$ and $g$, we notice that
$\{\xi_j,g_k\}=-\delta_{j k}$, $\{f_k,x_j\}=\delta_{j k}$ and
\[
  \begin{split}
    \{ g_k, g_j\} 	&= - \partial^2_{\xi_k,x_j}\psi - \partial^2_{\xi_j, x_k}\psi\,,\\
    \{g_k, f_j\} 	&= - \delta_{k,j} + \partial^2_{\xi_k,\xi_j}\psi +
    \partial^2_{x_k,x_j}\psi\,.
  \end{split}
\]
Since $\psi = \Psi(X_2)$, the only non-zero terms involving $\psi$ are
obtained for $j=k=2$. Thus,
\begin{equation}\label{eq.2s1}
  2s(x,\xi)=\left(\sum_k -\partial_{f_k}\partial_{\xi_k}+\frac{1}{2}\partial_{f_k}\partial_{f_k}+\partial_{g_k}\partial_{x_k}+\frac{1}{2}\partial_{g_k}\partial_{g_k}\right)w(x,\xi,0,0)+R_1\,,
\end{equation}
where $R_1 = \O(|\textup{d}^2 \psi| |\textup{d}^2_{X_2} p_h|)$. Let us
look at the first term in the right-hand side of~\eqref{eq.2s1} and
recall~\eqref{eq.w}. Then, we can write it as
\[
  \left(\sum_k
    -\partial_{f_k}\partial_{\xi_k}+\frac{1}{2}\partial_{f_k}\partial_{f_k}+\partial_{g_k}\partial_{x_k}+\frac{1}{2}\partial_{g_k}\partial_{g_k}\right)w(x,\xi,0,0)=-\frac{1}{2}\widehat{\Delta
    p_h}+ R_2\,,
\]
where again, $ R_2= \O(|\textup{d}^2 \psi| |\textup{d}^2_{X_2} p_h|)$,
so that finally,
\[
  s = - \frac{1}{4}\widehat{\Delta p_h} + \O(\min( |X_1|^2 ,
  1)) + \O(h^2).
\]
We can summarize the discussion above in the following.
\begin{scholie}\label{sc.scholie}
  Under the conclusion of Lemma~\ref{lemma:normal-form-L_h}, consider
  $\psi$ bounded with $\dd \psi\in S(1)$ and $m=m(X_2)\in S(1)$. Then
  \[
    \langle m e^{-\psi/h} T \mathscr{P} u, e^{-\psi/h} T u\rangle =
    \int_{\R^4} |u_\psi|^2\left[m \widehat{p}_h + h m s + h r +
      \O(h^2) \right] \dd X_1 \dd X_2\,,
  \]
  where $r,s\in S(1)$ and $\abs{r}\leq C(\min( |X_1|^2,
  1))$.  Moreover, we have the following properties.
  \begin{enumerate}[label=\textup{(\roman*)}.]
  \item When $m=1$, $r=0$
  \item When $\psi= \Psi(X_2)$,
    \[
      s=-\frac{1}{4}\widehat{\Delta p_h}(x,\xi)+ \check{R}\,,
    \]
    where $ \check{R}\in S(1)$ and
    $\check{R}=\O(|\textup{d}^2\Psi|\min (|X_1|^2 + h^2,1))$.
  \end{enumerate}
  Moreover, all estimates are uniform for $h$ small and
  $\dd\psi$ varying in a bounded subset of $S(1)$.
\end{scholie}

Noticing that $\check R$ is zero when $\psi=0$, we get the following.
\begin{proposition}
  When $\sigma\in S(1)$,
  \begin{equation}
  \begin{split}
    \langle \Op(\sigma)u,u\rangle_{L^2(\R^2)}
    &=\langle T\Op(\sigma)u,Tu\rangle_{L^2(\R^4)}\\
    &=\int_{\R^4}\left(\sigma(x,\xi)-\frac{h}{4}\Delta
      \sigma(x,\xi)+\O(h^2)\right)|u_\psi|^2\dd x\dd\xi\,.
    	\end{split}
  \end{equation}
\end{proposition}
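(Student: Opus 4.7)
The plan is to view this statement as the specialization to $\psi\equiv 0$, $m\equiv 1$ of the general subprincipal calculus developed in Proposition~\ref{prop.WtoB}--Scholium~\ref{sc.scholie}, applied with $\Op(\sigma)$ in place of $\mathscr{P}$. The first equality is immediate: since $T\colon L^2(\R^2)\to L^2(\R^4)$ is an isometry, $T^\ast T=\mathrm{Id}$, hence $\langle\Op(\sigma)u,u\rangle_{L^2(\R^2)}=\langle T\Op(\sigma)u,Tu\rangle_{L^2(\R^4)}$.

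For the second equality, I would first check that the machinery behind the Scholium applies to a generic $\sigma\in S(1)$ and not only to the symbol $p_h$ of the normalized magnetic Laplacian. The key observation is that Proposition~\ref{prop.WtoB} requires only $q_0\in S_{\R^8}(1)$, with no analyticity assumption, while in Corollary~\ref{cor.multiplication} holomorphy is invoked only through the complex shift $p_h(x-\xi^\ast-i\partial_\xi\psi,x^\ast+i\partial_x\psi)$. Setting $\psi=0$ collapses this to $\sigma(x-\xi^\ast,x^\ast)$, which is precisely the exact Weyl symbol of $T\Op(\sigma)T^\ast$ on the range of $T$ (using $T\Op(\sigma)=\Op(\sigma_T)T$) and is perfectly well-defined for any real-smooth $\sigma\in S(1)$. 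Then I would read off the three relevant ingredients from Scholium~\ref{sc.scholie} with $\psi=0$: Notation~\ref{notation.hat} gives $\widehat\sigma=\sigma$; item~(i) kills the remainder $r$ because $m=1$; and item~(ii) gives $s=-\tfrac14\widehat{\Delta\sigma}+\check R$ with $\check R=\O(|\mathrm{d}^2\Psi|\cdot\min(|X_1|^2+h^2,1))$, hence $\check R\equiv 0$. Since $\widehat{\Delta\sigma}=\Delta\sigma$ at $\psi=0$, substituting these back yields the announced formula, the factor $|u_\psi|^2$ being simply $|Tu|^2$.

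The main obstacle, though modest, is bookkeeping: one must track through the chain Proposition~\ref{prop.WtoB} $\to$ Corollary~\ref{cor.multiplication} $\to$ Scholium~\ref{sc.scholie} to certify that no step relies on the magnetic structure of $p_h$ or on its holomorphic extension once $\psi$ is set to zero. A direct sanity check is to recompute the subprincipal term from scratch with $w(x,\xi,f,g)=\sigma(x-g,\xi+f)$: the formula giving $2s$ in \eqref{eq.2s1} collapses to
\[
\sum_k\Bigl(-\partial_{f_k}\partial_{\xi_k}+\tfrac12\partial_{f_k}^2+\partial_{g_k}\partial_{x_k}+\tfrac12\partial_{g_k}^2\Bigr)w\Big|_{f=g=0}=\sum_k\Bigl(-\tfrac12\partial_{\xi_k}^2\sigma-\tfrac12\partial_{x_k}^2\sigma\Bigr)=-\tfrac12\Delta\sigma,
\]
confirming $s=-\tfrac14\Delta\sigma$ and closing the argument.
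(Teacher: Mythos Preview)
Your proposal is correct and follows essentially the same approach as the paper: the paper's entire proof is the one-line observation that $\check R$ vanishes when $\psi=0$, and you have simply unpacked this by specializing the Proposition~\ref{prop.WtoB}--Scholium~\ref{sc.scholie} machinery to $\psi=0$, $m=1$, with $\sigma$ in place of $p_h$. Your additional care in noting that the holomorphy assumption becomes vacuous at $\psi=0$, and your direct recomputation of $2s$ from~\eqref{eq.2s1} with $w(x,\xi,f,g)=\sigma(x-g,\xi+f)$, are welcome sanity checks that the paper leaves implicit.
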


\begin{remark}\label{rem.harmonic}
  This classical proposition (see~\cite[Corollary 3.5.7 \& Section
  3.6, Ex. 7]{Martinez-book} and consider also~\cite[Theorem
  13.10]{Z12}) is also true when $\sigma$ is a quadratic form, and in
  this case the remainder $\O(h^2)$ is zero.
\end{remark}

\section{Microlocal Agmon estimates}\label{sec.4}
In this section, we establish Agmon estimates with respect to $X_1$ in
an exponentially weighted space with respect to $X_2$. These estimates
are stated in Theorem~\ref{thm.AgmonX1.0} and~\ref{thm.AgmonX1}. They
imply Theorem~\ref{thm.agmonmicroloc}. In this whole section we will
consider $u\in L^2(\R^2)$ solving the equation
\begin{equation}\label{eq.Pu=0}
  \mathscr{P} u = \Op\left(\frac{1}{1+p_\M}\right) \Op(p_\M - h \mu + h^2 q) u = 0.
\end{equation}
with $p_{\mathscr{L}} = p_\M + h^2 q$ satisfying the conclusion of Lemma
\ref{lemma:normal-form-L_h}, so that the conclusions of
Scholium~\ref{sc.scholie} applies.

\begin{theorem}\label{thm.agmonmicroloc}
  Let $\Psi_1,\Psi_2$ be non-negative Lipschitz functions with a
  unique and non-degenerate minimum at $0$ with minimum value $0$.  We
  also assume that they go linearly to infinity at infinity. We set
  $\psi(x,\xi)=\Psi_1(X_1)+\Psi_2(X_2)$.  Given $K>0$, there exist
  $\varepsilon,h_0,C>0$, such that, for all $h\in(0,h_0)$,
  $\mu\leq b_0+K h$ and $u$ solving~\eqref{eq.Pu=0}, we have
  \[\int_{\R^4}e^{2\varepsilon\psi(x,\xi)/h}|Tu|^2\dd x\dd\xi\leq
    C\int_{\R^4}|Tu|^2 \dd x\dd\xi\, \quad ( = C\norm{u}^2).\]
\end{theorem}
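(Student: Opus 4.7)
My plan is to feed the energy identity of Scholium~\ref{sc.scholie} with the weight $\varepsilon\psi$ (for $\varepsilon>0$ small) and multiplier $m\equiv 1$, use the hypothesis $\mathscr{P}u=0$, and run an IMS-type absorption argument. As a preliminary reduction I would truncate $\psi$ to $\psi_R=\min(\psi,R)$ (still Lipschitz and bounded, with $\dd\psi_R\in S(1)$ uniformly in $R$) and aim for constants independent of $R$; the conclusion for $\psi$ itself then follows by monotone convergence. Writing $v:=u_{\varepsilon\psi_R}$ so that $|v|^2=e^{2\varepsilon\psi_R/h}|Tu|^2$, Scholium~\ref{sc.scholie} gives
\[
  0=\int_{\R^4}\bigl[\widehat{p}_h+hs+\O(h^2)\bigr]|v|^2\,\dd X_1\,\dd X_2,
\]
where $\widehat{p}_h$ denotes $p_h$ shifted by $2\varepsilon\partial_{\bar z}\psi_R$ and $s=-\tfrac14\widehat{\Delta p_h}+\check R$.

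The analytic heart of the proof is a pointwise lower bound on the bracket. Away from $\{X_1=0\}$, the principal symbol $p_\M$ is a positive-definite quadratic form in $X_1$ (with determinant $B^2>0$ by Lemma~\ref{lemma:normal-form-L_h}) and therefore $p_\M\geq c|X_1|^2$. Near $X_1=0$, the subprincipal term $hs$ supplies the transverse harmonic-oscillator zero-point energy (cf.\ Remark~\ref{rem.harmonic}), effectively contributing $hB(X_2)$ and thus compensating the $-h\mu$ coming from the eigenvalue $\mu\leq b_0+Kh$. By assumption~\ref{as.i}, $B(X_2)-b_0\geq\tilde c|X_2|^2$ near the origin and $\geq\eta>0$ at infinity. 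Choosing $\varepsilon$ small enough to absorb the $\O(\varepsilon^2|\dd\psi|^2)$ error from the complex shift, I expect the region-wise inequality
\[
  \widehat{p}_h+hs+\O(h^2)\;\geq\;c_0\bigl(|X_1|^2+h\min(|X_2|^2,1)\bigr)-Ch^2\,\mathbf{1}_{U_h}
\]
on $\R^4$, where $U_h=\{|X|\leq rh^{1/2}\}$ is a small central region.

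Plugging this into the identity yields
\[
  c_0\int\bigl(|X_1|^2+h\min(|X_2|^2,1)\bigr)|v|^2\,\dd X_1\,\dd X_2\;\leq\;Ch^2\int_{U_h}|v|^2.
\]
The crucial observation for closing the argument is that on $U_h$ the non-degeneracy of $\psi$ at $0$ gives $\psi_R(X)\leq C|X|^2\leq Cr^2h$, so $e^{2\varepsilon\psi_R/h}=\O(1)$ and $\int_{U_h}|v|^2\leq C\|u\|^2$. Splitting the left-hand integral over the complement of $U_h$ according to $\{|X_1|\geq h^{1/2}\}$, $\{h^{1/2}\leq|X_2|\leq 1\}$, $\{|X_2|\geq 1\}$ --- regions in which the relevant weight $|X_1|^2$ or $h\min(|X_2|^2,1)$ is bounded below by $h$, $h^2$, $h$ respectively --- then yields $\int_{\R^4}|v|^2\leq C\|u\|^2$ uniformly in $R$, which is the claim.

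The main obstacle I foresee is the Melin-type compensation of $-h\mu$ by the subprincipal symbol $hs$ in the presence of the complex deformation by $\varepsilon\psi$: making this quantum ground-state argument rigorous, together with the control of the cross-terms produced by the fact that $\psi$ mixes $X_1$ and $X_2$, is precisely what is achieved by the two preparatory Theorems~\ref{thm.AgmonX1.0} and~\ref{thm.AgmonX1} of this section, whose combined conclusion produces the required decay in the composite weight $\Psi_1(X_1)+\Psi_2(X_2)$.
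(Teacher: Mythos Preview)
Your one-step strategy breaks down at the ``region-wise inequality''. The pointwise lower bound
\[
\widehat{p}_h+hs+\O(h^2)\;\geq\;c_0\bigl(|X_1|^2+h\min(|X_2|^2,1)\bigr)-Ch^2\,\mathbf{1}_{U_h}
\]
is simply false. At $X_1=0$ one has $\widehat{p}_h\simeq-h\mu$ and $hs\simeq-\tfrac{h}{2}\Tr\mathcal{Q}_{X_2}$ (with $\mathcal{Q}_{X_2}$ the Hessian of $p_\M$ in $X_1$), so the left-hand side is $\simeq -h(b_0+\tfrac12\Tr\mathcal{Q}_{X_2})$, negative of order $h$, not $\geq c_0 h\min(|X_2|^2,1)$. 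The compensation between $-h\mu$ and the quadratic form in $X_1$ is a genuine Melin/ground-state phenomenon: it only becomes visible \emph{after} integrating in $X_1$, via
\[
\int_{\R^2}\Bigl(\Re\mathcal{Q}_{X_2}(X_1)-\tfrac{h}{2}\Tr\Re\mathcal{Q}_{X_2}\Bigr)|u_\psi|^2\,\dd X_1\;\geq\;h\sqrt{\det\Re\mathcal{Q}_{X_2}}\int|u_\psi|^2\,\dd X_1.
\]
For this spectral inequality to hold, $X_1\mapsto u_\psi(X_1,X_2)$ must lie in the range of the $X_1$-FBI transform, i.e.\ be of the form $e^{-\xi_1^2/2h}\times(\text{holomorphic in }x_1-i\xi_1)$. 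This is the case when the weight depends on $X_2$ only, but is destroyed by the factor $e^{\varepsilon\Psi_1(X_1)/h}$. Relatedly, the formula $s=-\tfrac14\widehat{\Delta p_h}+\check R$ you invoke is item~(ii) of Scholium~\ref{sc.scholie}, valid only under the hypothesis $\psi=\Psi(X_2)$.

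This is why the paper separates the two directions. First (Theorem~\ref{thm.AgmonX1.0}), by pure ellipticity in $X_1$, one shows
\[
\int e^{2\varepsilon(\Psi_1+\Psi_2)/h}|Tu|^2\leq C\int e^{2\varepsilon\Psi_2/h}|Tu|^2,
\]
so adding the $X_1$-weight is cheap once the $X_2$-weighted norm is controlled. Second (Theorem~\ref{thm.AgmonX2}), with weight $\psi=\varepsilon\Psi_2(X_2)$ \emph{only}, one runs the harmonic-oscillator argument above to obtain $\int e^{2\varepsilon\Psi_2/h}|Tu|^2\leq C\|u\|^2$; Theorems~\ref{thm.AgmonX1.0} and~\ref{thm.AgmonX1} enter there only to absorb $\O(|X_1|^k)$ error terms, not to produce the Melin positivity. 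Your final paragraph gestures at this decoupling but attributes the ground-state step to the wrong pair of theorems, and the body of your argument does not implement it.
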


\subsection{Decay away from the characteristic manifold}\label{sec.41}
In this section, we establish the exponential decay of $Tu$ with
respect to $X_1$.

\subsubsection{First estimate}
One will need the following elementary lemma.
\begin{lemma}\label{lem.deformed}
  Recall that
  $p_\M(x,\xi)=g^{11}\xi_1^2 - 2 g^{12} \xi_1 x_1 + g^{22}
  x_1^2$. Then, there exist non-negative numbers
  $\gamma,c_1, c_2, c_3$ such that
  \begin{enumerate}[label=\textup{(\roman*)}.]
  \item for all $|X_1|\geq \gamma$, $\frac{p_\M}{1+p_\M}\geq c_1$,
  \item for all $|X_1|\leq \gamma$,
    $\frac{p_\M}{1+p_\M}\geq c_2|X_1|^2$.
  \end{enumerate}
  If, moreover, $\varepsilon$ is small enough,
  \begin{enumerate}[label=\textup{(\roman*)}.]
  \item for all $|X_1|\geq \gamma$,
    $\Re\frac{\widehat p_\M}{1+\widehat p_\M}\geq c_1$,
  \item for all $|X_1|\leq \gamma$,
    $\Re\frac{\widehat p_\M}{1+\widehat p_\M}\geq \Re \widehat p_\M-c_3|X_1|^4$, and
    $\Re \widehat p_\M\geq c_2|X_1|^2$,
  \end{enumerate}
  where we used Notation~\ref{notation.hat}.
\end{lemma}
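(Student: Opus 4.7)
The plan is to reduce the lemma to a single algebraic observation about $p_\M$ seen as an $X_1$-quadratic form, and then pass to $\widehat p_\M$ by a holomorphic perturbation. Write
\[
p_\M(X) = \langle M(X) X_1, X_1\rangle,
\qquad
M(X) = \begin{pmatrix} g^{22} & -g^{12}\\ -g^{12} & g^{11}\end{pmatrix},
\]
so that $\det M = B^2$ and $\operatorname{tr} M = g^{11}+g^{22}$. For any positive $2\times 2$ symmetric matrix one has the trivial bound $\lambda_{\min}(M) \geq \det M/\operatorname{tr} M$. Since Lemma~\ref{lemma:normal-form-L_h} provides $g^{ij}\in S_{\S_a}(1)$ (hence bounded on $\R^4$) and $B\geq b_0>0$ uniformly on $\R^4$, this yields $c_0|X_1|^2 \leq p_\M(X) \leq C_0|X_1|^2$ everywhere on $\R^4$. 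Monotonicity of $t\mapsto t/(1+t)$ on $[0,\infty)$ immediately gives (i) with $c_1 = c_0\gamma^2/(1+c_0\gamma^2)$, and (ii) with $c_2 = c_0/(1+C_0\gamma^2)$.

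For (iii)--(iv) I would exploit holomorphicity of $p_\M$ on $\S_a$ and the $S(1)$ bounds on the $g^{ij}$ to Taylor-expand $\widehat p_\M(X)=p_\M(\tilde X)$ about the real point $X$, where $\tilde X - X = O(\varepsilon)$ comes from $\partial_{\bar z}\psi$. Since $p_\M$ and its gradient are real on $\R^4$, one gets
\[
|\widehat p_\M - p_\M| + |\Im \widehat p_\M| \lesssim \varepsilon(1+|X_1|).
\]
For (iii), on $|X_1|\geq \gamma$ and $\varepsilon$ small, this preserves $a:=\Re\widehat p_\M \geq \tfrac{c_0}{2}|X_1|^2$, hence $a\geq \tfrac{c_0\gamma^2}{2}>0$, while $b:=\Im\widehat p_\M$ satisfies $|b|/(1+a)$ bounded; the identity
\[
\Re\frac{z}{1+z}=\frac{a(1+a)+b^2}{(1+a)^2+b^2}
\]
then provides a uniform positive lower bound $c_1$. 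For (iv), on $|X_1|\leq \gamma$ the quadratic vanishing of $p_\M$ at $X_1=0$, together with the form of the shift (real part $\nabla\psi$, imaginary part $J\nabla\psi$), gives $|\widehat p_\M|\lesssim |X_1|^2$ and $\Re\widehat p_\M\geq c_2|X_1|^2$; plugging into $z/(1+z)=z-z^2+O(|z|^3)$ then yields $\Re(\widehat p_\M/(1+\widehat p_\M))\geq \Re\widehat p_\M - c_3|X_1|^4$.

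The hard step is the lower bound $\Re\widehat p_\M \geq c_2|X_1|^2$ in (iv) near $X_1=0$: a crude error $O(\varepsilon(1+|X_1|))$ would swamp $c_0|X_1|^2$ for $|X_1|\ll\varepsilon$. The remedy is to use the precise geometric form of the shift (real part $\nabla\psi$, imaginary part $J\nabla\psi$, tied through the symplectic matrix $J$) and the fact that the $g^{ij}$ are critical at $0$, so that the $\varepsilon$-perturbation of $p_\M$ at $X_1=0$ vanishes to order $|X_1|^2$. This fine interplay is what necessitates the smallness of $\varepsilon$ in the statement.
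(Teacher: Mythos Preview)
The paper calls this lemma elementary and supplies no proof, so there is nothing to compare against; I will simply assess your argument. Parts (i)--(ii) are fine, and (iii) is essentially correct (your error bound should really be $O(\varepsilon(1+|X_1|)^2)$ rather than $O(\varepsilon(1+|X_1|))$, since $\nabla_{X_2}p_\M=O(|X_1|^2)$, but on $|X_1|\geq\gamma$ this is harmless).

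There is, however, a genuine gap in your treatment of (iv). You correctly spot that a generic $O(\varepsilon)$ shift in $X_1$ produces an error of order $\varepsilon|X_1|$ which swamps $c_0|X_1|^2$ near $X_1=0$. But neither of the two mechanisms you invoke resolves this. First, the symplectic structure of the shift does \emph{not} yield a cancellation: writing the $X_1$-shift as $a+iJa$ with $a=\nabla_{X_1}\psi$ and $J=\bigl(\begin{smallmatrix}0&1\\-1&0\end{smallmatrix}\bigr)$, and taking the coefficient matrix $M$ real for a moment, one has
\[
\Re\,\langle M(a+iJa),\,a+iJa\rangle \;=\; \langle (M-J^{T}MJ)a,\,a\rangle,
\qquad
M-J^{T}MJ=\begin{pmatrix} g^{22}-g^{11} & 2g^{12}\\ 2g^{12} & g^{11}-g^{22}\end{pmatrix},
\]
which is traceless and indefinite. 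So for a weight with $\nabla_{X_1}\psi(0,X_2)\neq 0$ one can have $\Re\widehat p_\M(0,X_2)<0$, violating (iv). Second, the criticality of the $g^{ij}$ at the origin is irrelevant: the estimate in (iv) must hold for \emph{all} $X_2$, where the coefficients are not critical; and in any case the $X_2$-shift enters only through $\nabla_{X_2}p_\M=O(|X_1|^2)$, which is harmless regardless of criticality.

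The correct mechanism is the structure of $\psi$ itself. In Section~\ref{sec.41} one has $\psi=\varepsilon\bigl(\Psi_1(X_1)+\Psi_2(X_2)\bigr)$ with $\Psi_1$ critical at $X_1=0$, hence $\nabla_{X_1}\psi=O(\varepsilon|X_1|)$ near $X_1=0$. This makes the $X_1$-component of the shift $O(\varepsilon|X_1|)$, and then
\[
\widehat p_\M - p_\M \;=\; \nabla_{X_1}p_\M\cdot O(\varepsilon|X_1|) \;+\; \nabla_{X_2}p_\M\cdot O(\varepsilon) \;+\;\cdots \;=\; O(\varepsilon|X_1|^2),
\]
from which both assertions in (iv) follow for $\varepsilon$ small. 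The lemma, with its undefined ``$\varepsilon$ small enough'', is tacitly assuming this form of $\psi$; your proof should invoke $\nabla_{X_1}\psi(0,\cdot)=0$ rather than the criticality of the $g^{ij}$.
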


\begin{theorem}\label{thm.AgmonX1.0}
  Given $K>0$, there exist $\varepsilon,h_0,C>0$ such that, for all
  $h\in(0,h_0)$, $\mu\leq K$ and $u$ solving~\eqref{eq.Pu=0}, we have
  \[\int_{\R^4}e^{2\varepsilon(\Psi_1(X_1)+\Psi_2(X_2))/h}|Tu|^2\dd
    x\dd\xi\leq C\int_{\R^4}e^{2\varepsilon\Psi_2(X_2)/h}|Tu|^2 \dd
    x\dd\xi\,.\]
\end{theorem}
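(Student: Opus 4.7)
The plan is to feed the weight $\psi=\varepsilon(\Psi_1(X_1)+\Psi_2(X_2))$ into the quadratic identity of Scholium~\ref{sc.scholie} with multiplier $m=1$, using $\mathscr{P} u = 0$ to annihilate the left-hand side. After smoothing $\Psi_1,\Psi_2$ if needed so that $\dd\psi$ belongs to a bounded subset of $S(1)$ (the general Lipschitz case being recovered by a limiting argument), the identity reads
\begin{equation*}
0 = \int_{\R^4}\bigl(\widehat{p}_h + h s + \O(h^2)\bigr)\,e^{2\psi/h}|Tu|^2\,\dd X_1\,\dd X_2,
\end{equation*}
where, by Scholium~\ref{sc.scholie}(i) and the choice $m=1$, one has $r\equiv 0$, and $s\in S(1)$ is bounded uniformly in $\psi$.

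Next I would combine $p_h = (p_\M - h\mu)/(1+p_\M) + \O(h^2)$ with the lower bounds of Lemma~\ref{lem.deformed}: choosing $\varepsilon$ small enough (uniformly in $\psi$) and using $\mu\leq K$, taking real parts in the above identity yields
\begin{equation*}
\Re\widehat{p}_h \geq c\min(|X_1|^2,1)-Kh-\O(h^2),
\end{equation*}
and therefore the coercivity estimate
\begin{equation*}
c\int_{\R^4}\min(|X_1|^2,1)\,e^{2\psi/h}|Tu|^2\,\dd X \;\leq\; C_0\, h\int_{\R^4}e^{2\psi/h}|Tu|^2\,\dd X,
\end{equation*}
where the constants $c,C_0>0$ depend on $K$ but not on the particular weight $\psi$.

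The conclusion then follows by splitting phase space at the scale $|X_1|^2\sim h$. Taking $M=2C_0/c$, the coercivity estimate yields $\int_{\{|X_1|^2\geq Mh\}}e^{2\psi/h}|Tu|^2 \leq \tfrac{1}{2}\int_{\R^4}e^{2\psi/h}|Tu|^2$, so that the full integral is bounded by twice its restriction to the small set $\{|X_1|^2<Mh\}$. On this region the non-degeneracy of $\Psi_1$ at the origin forces $\Psi_1(X_1)\leq C'|X_1|^2\leq C'Mh$, which makes $e^{2\varepsilon\Psi_1/h}$ uniformly bounded. Splitting $e^{2\psi/h} = e^{2\varepsilon\Psi_1/h}\,e^{2\varepsilon\Psi_2/h}$ then delivers the claimed inequality.

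The main technical point I anticipate is to check that the threshold "$\varepsilon$ small enough" of Lemma~\ref{lem.deformed} and the uniform $S(1)$-bounds on the subprincipal term $s$ in Scholium~\ref{sc.scholie} are both independent of the particular weight $\psi$ within the class of (smoothed) Lipschitz functions $\Psi_i$ with a fixed Lipschitz constant. Granted this uniformity, the argument is rather direct; the scale $|X_1|\sim\sqrt{h}$ at which localization occurs is precisely the one dictated by balancing the coercivity $c|X_1|^2$ against the $\O(h)$ loss coming from the bounded spectral parameter $\mu\leq K$.
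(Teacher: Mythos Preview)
Your approach is essentially the paper's: apply Scholium~\ref{sc.scholie} with $m=1$, extract coercivity from Lemma~\ref{lem.deformed}, and split at the scale $|X_1|\sim\sqrt{h}$ where $e^{2\varepsilon\Psi_1/h}$ is harmless. The one point you should make explicit is the a priori finiteness of $\int_{\R^4} e^{2\psi/h}|Tu|^2$: your absorption step $\int_{\{|X_1|^2\geq Mh\}}\leq\tfrac12\int_{\R^4}$ is vacuous if both sides are infinite, and your ``limiting argument'' (smoothing the Lipschitz $\Psi_i$ so that $\dd\psi\in S(1)$) does not address this, since $\Psi_1$ still goes to infinity. The paper handles this by first replacing $\Psi_1$ with the bounded truncation $\Psi_{1,k}=\chi(k^{-1}\varepsilon\Psi_1)\,\varepsilon\Psi_1$, running your argument uniformly in $k$ (the constants depend only on $S(1)$ semi-norms of $\dd\Psi_{1,k}$, which stay bounded), and then letting $k\to\infty$ via Fatou.
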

\begin{proof}
  Assume temporarily that $\Psi_1$ is bounded. Let us use
  Scholium~\ref{sc.scholie} with $m=1$. Then, taking the real part, we
  get
  \[
    \int_{\R^4}(\Re \widehat{p}_h-Ch)|u_\psi|^2\dd x\dd\xi\leq 0\,.
  \]
  Recall
  \[
    p_h(x,\xi)=\frac{ p_\M( x,\xi)-h\mu}{1+ p_\M( x,\xi)}+ \O(h^2)\,.
  \]
  Since $p_\M\geq 0$,
  \[
    \int_{\R^4}\left(\Re\frac{\widehat{p}_\M}{1+\widehat{p}_\M}-C(1+K)h\right)|u_\psi|^2\dd
    x\dd\xi\leq 0\,.
  \]
  Consider $R>0$ and the set
  \[
    J_R=\{X\in\R^4 : |X_1|\geq Rh^{\frac 12}\}\,.
  \]
  We write
  \[
    \begin{split}
      \int_{J_R}&\left(\Re\frac{\widehat{p}_\M}{1+\widehat{p}_\M}-C(1+K)h\right)|u_\psi|^2\dd x\dd\xi \\
      &\leq -\int_{\complement
        J_R}\left(\Re\frac{\widehat{p}_\M}{1+\widehat{p}_\M}-C(1+K)h\right)|u_\psi|^2\dd
      x\dd\xi\,,
    \end{split}
  \]
  and notice
  \[
    \left|\int_{\complement
        J_R}\left(\Re\frac{\widehat{p}_\M}{1+\widehat{p}_\M}-Ch\right)|u_\psi|^2\dd
      x\dd\xi \right|\leq C_R
    h(1+K)\int_{\R^4}e^{2\Psi_2(X_2)/h}|Tu|^2\dd x\dd\xi\,.
  \]
  From Lemma~\ref{lem.deformed}, we get $\tilde c_2>0$ such that on
  $J_R$,
  \[\Re\frac{\widehat{p}_\M}{1+\widehat{p}_\M}-C(1+K)h\geq \tilde c_2
    R^2 h-C(1+K)h\,.\] Choosing $R$ large enough, we get
  \[\int_{J_R}|u_\psi|^2\dd x\dd\xi\leq
    C_R\int_{\R^4}e^{2\Psi_2(X_2)/h}|Tu|^2\dd x\dd\xi\,,\] and then
  \[\int_{\R^4}|u_\psi|^2\dd x\dd\xi\leq
    C\int_{\R^4}e^{2\Psi_2(X_2)/h}|Tu|^2\dd x\dd\xi\,.\] If $\Psi_1$
  is not bounded, we introduce an appropriate cutoff function. For
  example, we apply the previous estimates to
  $\Psi_{1,k}:=\chi(k^{-1}\varepsilon\Psi_1(X_1))\varepsilon\Psi_1(X_1)$
  and send $k$ to $+\infty$. The estimates are independent of $k$
  because $d\Psi_{1,k}$ is uniformly bounded in $S(1)$. Then, we
  conclude with the Fatou lemma.
\end{proof}

\subsubsection{Agmon estimate with multiplier}
Let us now add a \emph{multiplier} in the previous estimate. This can
be done modulo $\O(h)$.
\begin{theorem}\label{thm.AgmonX1}
  Consider $m=m(X_2)$ non-negative with $m\in S(1)$. Then, for $M>0$,
  there exist $\epsilon,h_0,C>0$ such that, for all $h\in(0,h_0)$,
  $\mu\leq M$ and $u$ solving~\eqref{eq.Pu=0}, we have
  \[
    \int_{\R^4}me^{2\varepsilon(\Psi_1(X_1)+\Psi_2(X_2))/h}|Tu|^2\dd
    x\dd\xi\leq C\int_{\R^4}(m+h)e^{2\varepsilon\Psi_2(X_2)/h}|Tu|^2
    \dd x\dd\xi\,.
  \]
\end{theorem}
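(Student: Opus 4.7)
The plan is to follow closely the proof of Theorem~\ref{thm.AgmonX1.0}, now inserting the multiplier $m(X_2)$ in the FBI-side quadratic form. Starting from $\mathscr{P}u=0$, I would apply Scholium~\ref{sc.scholie} with multiplier $m(X_2)$ and weight $\psi=\varepsilon(\Psi_1(X_1)+\Psi_2(X_2))$, after the same cutoff regularization of $\Psi_1,\Psi_2$ used for Theorem~\ref{thm.AgmonX1.0} (to keep $\dd\psi$ uniformly in $S(1)$). Taking real parts yields
\[
  \int m \,\Re\widehat{p}_h \,|u_\psi|^2\dd X \leq Ch\int m \,|u_\psi|^2\dd X + Ch\int \min(|X_1|^2,1)\,|u_\psi|^2\dd X + Ch^2\int|u_\psi|^2\dd X.
\]
The only new feature compared to the case $m\equiv 1$ of Theorem~\ref{thm.AgmonX1.0} is the second term on the right, which comes from the $hr$ contribution in the Scholium together with the bound $|r|\leq C\min(|X_1|^2,1)$; by point (i) of Scholium~\ref{sc.scholie} this term vanishes when $m\equiv 1$.

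Next, I would invoke Theorem~\ref{thm.AgmonX1.0} as a black box to absorb this new term and the $\O(h^2)$ remainder. Since $\min(|X_1|^2,1)\leq 1$,
\[
  \int \min(|X_1|^2,1)\,|u_\psi|^2 \leq \int|u_\psi|^2 \leq C\int e^{2\varepsilon\Psi_2/h}|Tu|^2,
\]
and using $h\leq m+h$ pointwise, both error terms are dominated by $C\int(m+h)e^{2\varepsilon\Psi_2/h}|Tu|^2$. One is thus reduced to
\[
  \int m \,\Re\widehat{p}_h\,|u_\psi|^2 \leq Ch\int m\,|u_\psi|^2 + C\int(m+h)e^{2\varepsilon\Psi_2/h}|Tu|^2.
\]

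Finally, I would repeat the splitting $\R^4 = J_R\cup\complement J_R$ with $J_R=\{|X_1|\geq Rh^{1/2}\}$ for $R$ large, exactly as in the proof of Theorem~\ref{thm.AgmonX1.0}. Lemma~\ref{lem.deformed} gives $\Re\widehat{p}_h\geq c'h$ on $J_R$, with $c'$ growing like $R^2$ so that $c'>C$ once $R$ is fixed large enough (and $\varepsilon$ is small enough); on $\complement J_R$, the non-degeneracy of $\Psi_1$ at $0$ gives $\Psi_1\leq CR^2 h$, hence $e^{2\varepsilon\Psi_1/h}\leq C_R$ and
\[
  \int_{\complement J_R} m\,|u_\psi|^2 \leq C_R\int m\,e^{2\varepsilon\Psi_2/h}|Tu|^2.
\]
Adding the two pieces and absorbing the $Ch\int m|u_\psi|^2$ term on the right into the left yields the claim.

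The main obstacle is exactly the $hr$ subprincipal term: since $r$ carries no $m$ factor, it cannot be absorbed into the $m$-weighted quantities on the left-hand side, and must be dealt with by feeding Theorem~\ref{thm.AgmonX1.0} into the argument. This is precisely where the $m\rightsquigarrow m+h$ loss appearing in the right-hand side of Theorem~\ref{thm.AgmonX1} comes from.
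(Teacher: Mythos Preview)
Your overall scheme is the paper's, but there is a genuine gap in how you dispose of the subprincipal term $h\,r$. You bound
\[
Ch\int \min(|X_1|^2,1)\,|u_\psi|^2 \leq Ch\int |u_\psi|^2 \leq Ch\int e^{2\varepsilon\Psi_2/h}|Tu|^2
\]
by simply using $\min(|X_1|^2,1)\leq 1$. This is one power of $h$ short. Indeed, after the $J_R$--splitting, on $J_R$ one only has $\Re\widehat{p}_h\geq c'h$, so the left-hand side carries a factor $h$; once you divide it out, your right-hand side becomes $C\int e^{2\varepsilon\Psi_2/h}|Tu|^2$ (with weight $1$, not $h$), and this is \emph{not} controlled by $C\int(m+h)e^{2\varepsilon\Psi_2/h}|Tu|^2$ when $m$ is allowed to vanish --- which is exactly the situation where the theorem is used (e.g.\ $m(X_2)=\min(|X_2|^2,1)$ in the proof of Theorem~\ref{thm.AgmonX2}). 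So the final absorption step you describe does not close.

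The missing idea is that the factor $\min(|X_1|^2,1)$ itself can be converted into an extra $h$, using the exponential decay in $X_1$ already provided by Theorem~\ref{thm.AgmonX1.0}. Concretely, since $\Psi_1$ has a non-degenerate minimum at $0$ and linear growth at infinity, $\min(|X_1|^2,1)\leq C\,\Psi_1(X_1)$; then for any $\delta>0$,
\[
\min(|X_1|^2,1)\,e^{2\varepsilon\Psi_1/h}
\leq C\,\Psi_1\,e^{-2\delta\Psi_1/h}\,e^{2(\varepsilon+\delta)\Psi_1/h}
\leq \frac{Ch}{\delta}\,e^{2(\varepsilon+\delta)\Psi_1/h},
\]
and applying Theorem~\ref{thm.AgmonX1.0} with the (still small) exponent $\varepsilon+\delta$ yields
\[
Ch\int \min(|X_1|^2,1)\,|u_\psi|^2 \leq Ch^2\int e^{2(\varepsilon+\delta)\Psi_2/h}|Tu|^2.
\]
This is what the paper means by ``using again Theorem~\ref{thm.AgmonX1.0} with a smaller $\varepsilon$''. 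With this $h^2$ in hand, your splitting argument goes through and produces the stated $(m+h)$ on the right. Your write-up only needs this one upgrade.
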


\begin{proof}
  We use again Scholium~\ref{sc.scholie}, this time without assuming
  that $m=1$. We have
  \begin{equation*}
    \Re\int_{\R^4}p^{\psi}_{h,m}(x,\xi ;h)|u_\psi|^2\dd x\dd\xi=\O(h^2)\|u_\psi\|^2\,,
  \end{equation*}
  so that, with Theorem~\ref{thm.AgmonX1.0},
  \begin{equation*}
    \int_{\R^4}\Re p^{\psi}_{h,m}(x,\xi ;h)|u_\psi|^2\dd x\dd\xi\leq Ch^2\int_{\R^4}e^{2\varepsilon\Psi_2(X_2)/h}|Tu|^2\dd x\dd\xi\,.
  \end{equation*}
  Then, by Scholium~\ref{sc.scholie},
  \begin{multline*}
    \int_{\R^4}\left(m\Re \widehat{ p}_{h}(x,\xi ;h)-Chm\right)|u_\psi|^2\dd x\dd\xi\leq Ch\int_{\R^4}|X_1|^2|u_\psi|^2\dd x\dd\xi\\
    +Ch^2\int_{\R^4}e^{2\varepsilon\Psi_2(X_2)/h}|Tu|^2\dd x\dd\xi\,.
  \end{multline*}
  Using again Theorem~\ref{thm.AgmonX1.0} with a smaller $\varepsilon$
  to absorb the $|X_1|^2$ term,
  \begin{equation*}
    \int_{\R^4}\left(m\Re \widehat p_{h}(x,\xi ;h)-Chm\right)|u_\psi|^2\dd x\dd\xi\leq Ch^2\int_{\R^4}e^{2\varepsilon\Psi_2(X_2)/h}|Tu|^2\dd x\dd\xi\,.
  \end{equation*}
  Then, the analysis the same lines as in the proof of
  Theorem~\ref{thm.AgmonX1}. The same splitting of the integral in the
  left-hand-side gives the conclusion.
\end{proof}

\subsection{Subprincipal decay estimates}\label{sec.42}
Let us now prove an exponential estimate with respect to all the phase
space variables. In the previous section, we essentially used the
ellipticity of the operator outside of the characteristic set. The
results, while new in this precision as far as we know, are not
surprising. However, in this section, we have to understand what is
happening directly on the characteristic set, i.e understand in detail
the \emph{subprincipal} terms. This is a much finer analysis.  At the
microlocal level, the computations are similar to the ones
in~\cite{Sjostrand83}; however, instead of using the Boutet de Monvel
calculus for polynomial operators, we directly use the invertibility
of an effective harmonic oscillator.

\begin{theorem}\label{thm.AgmonX2}
  For $M>0$, there exist $\varepsilon,h_0,C>0$ such that, for all
  $h\in(0,h_0)$, $\mu\leq b_0+M h$ and $u$ solving~\eqref{eq.Pu=0}, we
  have
  \[\int_{\R^4}e^{2\varepsilon\Psi_2(X_2)/h}|Tu|^2\dd x\dd\xi\leq
    C\int_{\R^4}|Tu|^2 \dd x\dd\xi\,.\]
\end{theorem}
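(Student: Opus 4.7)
I would apply Scholium~\ref{sc.scholie} with $\psi=\varepsilon\Psi_2(X_2)$ and $m=1$, after reducing to bounded $\Psi_2$ via a cut-off and Fatou as in the proof of Theorem~\ref{thm.AgmonX1.0}. From $\mathscr{P}u=0$, the scholium yields
\[
  \Re\int|u_\psi|^2\Bigl(\widehat{p_h}-\tfrac{h}{4}\widehat{\Delta p_h}+h\check R\Bigr)\,dX_1\,dX_2 = O(h^2)\|u_\psi\|^2.
\]
The goal is to extract---by integration in $X_1$ at each fixed $X_2$, via an effective harmonic-oscillator ground-state estimate---an inequality of the form
\[
  \int e^{2\varepsilon\Psi_2/h}\bigl(B(0,X_2)-\mu-C(\varepsilon+h)\bigr)\,\|Tu(\cdot,X_2)\|_{L^2(\R^2_{X_1})}^2\,dX_2 \leq O(h)\|u_\psi\|^2,
\]
which, combined with $\mu\leq b_0+Mh$ and the fact that $B(0,X_2)>b_0+\delta$ outside a compact set (Assumption~\ref{as.i}), yields the weighted bound.

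\textbf{Effective fiber oscillator.} Near $X_1=0$, one expands
\[
  \Re\widehat{p_h}-\tfrac h4\Re\widehat{\Delta p_h} = \Re\widehat{p_\M}(X_1,X_2)-h\mu-\tfrac h2\Re\bigl(\widehat{g^{11}}+\widehat{g^{22}}\bigr)(0,X_2)+O(|X_1|^3)+O(h^2),
\]
the 4D Laplacian at $X_1=0$ contributing only through $\partial_{x_1}^2,\partial_{\xi_1}^2$, since the $g^{ij}$ multiply $\xi_1^2,x_1\xi_1,x_1^2$. For each frozen $X_2$, the real positive-definite quadratic form $Q_{X_2}(X_1):=g^{11}(0,X_2)\xi_1^2+2g^{12}(0,X_2)x_1\xi_1+g^{22}(0,X_2)x_1^2$ has 1D Weyl quantization whose bottom of spectrum equals $hB(0,X_2)=h\sqrt{g^{11}g^{22}-(g^{12})^2}(0,X_2)$. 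The 2D FBI factorizes as $Tu(X_1,X_2)=c_h T_1 v_{X_2}(X_1)$ for some $v_{X_2}\in L^2(\R_{y_1})$ depending on $X_2$; applying Remark~\ref{rem.harmonic}---the exact FBI identity on quadratic symbols---in the 1D variable yields, for each $X_2$,
\[
  \int\Bigl[Q_{X_2}(X_1)-\tfrac h2(g^{11}+g^{22})(0,X_2)\Bigr]\,|Tu(X_1,X_2)|^2\,dX_1 \geq hB(0,X_2)\,\|Tu(\cdot,X_2)\|^2.
\]
The complex shift $X_2\mapsto X_2-2i\varepsilon\partial_{x_2}\Psi_2$ induced by the $\widehat{\phantom{x}}$-operation is $O(\varepsilon)$; by holomorphy on the strip (Assumption~\ref{as.ii}), $\Re B(0,X_2-2i\varepsilon\partial_{x_2}\Psi_2)\geq B(0,X_2)-C\varepsilon$, and the oscillator bound survives modulo an $O(\varepsilon h)$ perturbation.

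\textbf{Closing the Agmon bound.} Theorem~\ref{thm.AgmonX1.0} controls the $|X_1|\geq\gamma$ tail, reducing the problem to $|X_1|\leq\gamma$; there the cubic error $O(|X_1|^3)$ and the subprincipal piece $h\check R$ (bounded by $O(h|X_1|^2+h^3)$) are absorbed by $\Re\widehat{p_\M}\geq c|X_1|^2$ (Lemma~\ref{lem.deformed}). Since $\mu\leq b_0+Mh$, the coefficient $B(0,X_2)-\mu-C(\varepsilon+h)$ is $\geq\delta/2$ on $|X_2|\geq R_0$ and $\geq -C(\varepsilon+h)$ on $|X_2|<R_0$. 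Splitting the integral at $|X_2|=R_0$, the positive outer contribution dominates the negative inner one provided $\int_{|X_2|\leq R_0}e^{2\varepsilon\Psi_2/h}\|Tu(\cdot,X_2)\|^2\,dX_2$ is bounded uniformly in $h$ by $C\|u\|^2$; this uniform bound follows from the naive Agmon estimate~\eqref{eq.Agmon0} at scale $h^{1/4}$, which provides enough concentration of $u$ around $0$ to absorb the exponential weight on the bounded region $\{|X_2|\leq R_0\}$.

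\textbf{Main obstacle.} The heart of the argument is the identification and exploitation of the effective harmonic oscillator in the fiber direction $X_1$: the integrand is \emph{pointwise} negative along the characteristic manifold $\{X_1=0\}$, so the Agmon-type positivity is recovered only after integration in $X_1$, via the ground-state energy $hB(0,X_2)$. Holomorphy of the coefficients on a complex strip is essential to handle the complex deformation induced by the weight $\psi$. The absorption of the compact-$X_2$ contribution uniformly in $h$---avoiding the catastrophic trivial bound $e^{2\varepsilon\Psi_2/h}\leq e^{O(1/h)}$---is a secondary but genuinely delicate step requiring input from the previously known weaker Agmon estimate.
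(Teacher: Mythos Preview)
Your overall plan---apply the scholium with a weight in $X_2$ only, freeze coefficients at $X_1=0$ to extract an effective harmonic oscillator, and use the ground-state energy $h\sqrt{\det Q_{X_2}}=hB(0,X_2)$ to produce positivity in the $X_2$ direction---is exactly the mechanism the paper uses. The gap is in the closing step.

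You split at a \emph{fixed} radius $|X_2|=R_0$ and then need
\(
\int_{|X_2|\leq R_0} e^{2\varepsilon\Psi_2/h}\|Tu(\cdot,X_2)\|^2\,dX_2 \leq C\|u\|^2
\)
uniformly in $h$. On this fixed region the weight reaches size $e^{c\varepsilon/h}$, and you propose to beat it with the naive estimate~\eqref{eq.Agmon0}. But~\eqref{eq.Agmon0} gives only $e^{-c/\sqrt{h}}$ decay (in position space, before the metaplectic and FBI steps), which cannot compensate $e^{c\varepsilon/h}$; the product still blows up. So the appeal to the weak Agmon estimate does not close the argument, and without it your inner contribution is uncontrolled.

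The paper avoids this trap by never producing a constant error $-C\varepsilon$. Two ingredients you are missing make the negative region shrink with $h$ rather than staying fixed:
\begin{itemize}
\item[(a)] The coefficients $g^{ij}$ are \emph{critical} at $0$ (Lemma~\ref{lemma:normal-form-L_h}) and $d\psi(0)=0$, so the errors coming from freezing at $X_1=0$ and from the complex shift all carry a factor $\min(|X_2|^2,1)$; in particular $\sqrt{\det\Re\mathcal{Q}_{X_2}}=B(0,X_2)\bigl(1+\varepsilon\,\O(\min(|X_2|^2,1))\bigr)$, not $B(0,X_2)+\O(\varepsilon)$.
\item[(b)] Theorem~\ref{thm.AgmonX1} (the version \emph{with multiplier} $m(X_2)$), which you do not invoke, is what lets one trade $h^k|X_1|^{2\ell}\min(|X_2|^2,1)$ for $h^{k+\ell}\bigl(\min(|X_2|^2,1)+h\bigr)$ while keeping the $X_2$-factor.
\end{itemize}
With (a) and (b), the effective inequality becomes
\(
\int\bigl((1-C\varepsilon)\min(|X_2|^2,1)-Ch\bigr)|u_\psi|^2\leq 0,
\)
so the bad set is $\{|X_2|\lesssim\sqrt{h}\}$, where $e^{2\varepsilon\Psi_2/h}\leq e^{C\varepsilon}$ is uniformly bounded, and the standard Agmon splitting (at radius $R\sqrt{h}$, not $R_0$) closes without any external input.
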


\begin{proof}
  This time, $\psi= \Psi(X_2)$. Let us use Scholium~\ref{sc.scholie}
  again with $m=1$. We get
  \[
    \int_{\R^4}\Re\left(\widehat{ p}_h-\frac{h}{4}\widehat{\Delta
        p_h}-\check R-Ch^2\right)|u_\psi|^2\dd x\dd\xi\leq 0\,.
  \]
  With Theorem~\ref{thm.AgmonX1.0}, we can estimate $\check R$ and get
  \[
    \int_{\R^4}\Re\left(\widehat{ p}_h-\frac{h}{4}\widehat{\Delta p_h}
      - Ch^2\right)|u_\psi|^2\dd x\dd\xi\leq 0\,.
  \]
  Observe that
  \begin{equation}\label{eq:decomp-p_h}
    \widehat{p}_h + \O(h^2) = \frac{\widehat{p}_\M - h \mu}{1+\widehat{p}_\M}=\widehat{p}_\M - h \mu + h\mu \frac{\widehat{p}_\M}{1+\widehat{p}_\M}-\frac{\widehat{p}_\M^2}{1+\widehat{p}_\M}\,.
  \end{equation}
  The fourth term in the right-hand side is $\O(\min(|X_1|^4,1))$, and
  can be absorbed using Theorem~\ref{thm.AgmonX1.0}, and replaced by a
  $\O(h^2)$. The third term can also be absorbed in the same
  fashion, and replaced by $\O(h^2 \mu)$. We deduce that
  \[
    \int_{\R^4}\left(\Re\widehat{p}_\M -b_0
      h-\frac{h}{4}\Re\widehat{\Delta
        p_h}-C(1+K)h^2\right)|u_\psi|^2\dd x\dd\xi\leq 0\,.
  \]
  Using Equation~\eqref{eq:decomp-p_h} to estimate the contribution
  from $\widehat{\Delta p_h}$, and using the same arguments,
  \[
    \int_{\R^4}\left(\Re\widehat{p}_\M-b_0
      h-\frac{h}{4}\Re\widehat{\Delta_{X_1}
        p_\M}-C(1+K)h^2\right)|u_\psi|^2\dd x\dd\xi\leq 0\,,
  \]
  Now, we will approximate $\widehat{p}_M$ by a quadratic form in
  $X_1$, with coefficients depending only on $X_2$. To this end, let
  \[
    \mathcal{Q}_{X_2}(X_1) := \left[\widehat{g^{11}}_{|X_1=0}\right]
    \xi_1^2 - 2\left[\widehat{g^{12}}_{|X_1=0}\right]\xi_1 x_1 +
    \left[\widehat{g^{22}}_{|X_1=0}\right]x_1^2.
  \]
  (Observe that since $\psi$ does not depend on $X_1$, $\ \widehat{}\ $ and
  differentiation in $X_1$ commute). Since the coefficients $g^{i j}$
  are assumed to be critical at $0$, and $d\psi(0)=0$, we find
  \[
    \begin{split}
      \Re\widehat{p}_\M - \frac{h}{4}\Re\widehat{\Delta_{X_1} p_\M} &= \Re \mathcal{Q}_{X_2}(X_1) - \frac{h}{2} \Tr\Re \mathcal{Q}_{X_2} \\
       + \O(&|X_1|^2(\min( |X_1|^2 +|X_2|^2, 1) + h |X_1|^2 +
      h |X_1|\min( |X_2|,1)))
    \end{split}
  \]
  Using Theorem~\ref{thm.AgmonX1}, we can absorb
  $\O( h^k |X_1|^{2\ell}\min( |X_2|^2,1))$ and replace it by
  \[
    \O( h^{k+\ell}(\min(|X_2|^2,1) + h)).
  \]
  Therefore, using also
  $|X_1| |X_2| \leq \varepsilon^{-1}|X_1|^2 + \varepsilon|X_2|^2$, we
  get
  \[
    \begin{split}
      \int \left(\Re \mathcal{Q}_{X_2}(X_1) - b_0 h - \frac{h}{2} \Tr\Re \mathcal{Q}_{X_2}  - C\varepsilon h\min(|X_2|^2,1)\right)& |u_\psi|^2 \dd X_1 \dd X_2 \leq \\
      (1 + \varepsilon^{-1}+K)h^2\int|&u_\psi|^2 \dd X_1 \dd X_2.
    \end{split}
  \]

  For fixed $X_2$, we recognize the Bargmann symbol of the
  \enquote{harmonic oscillator} in $X_1$ (see
  Remark~\ref{rem.harmonic}) and thus
  \[
    \int_{\R^2} \left(\Re \mathcal{Q}_{X_2}(X_1) - \frac{h}{2} \Tr \Re
      \mathcal{Q}_{X_2}\right)|u_\psi|^2 \dd X_1 \geq h
    \sqrt{\det{\Re\mathcal{Q}_{X_2}}}\int |u_\psi|^2 \dd X_1.
  \]
  So that
  \[
    \int_{\R^4} \Big(\sqrt{\det{\Re\mathcal{Q}_{X_2}}} - b_0 -
    C\varepsilon\min(|X_2|^2,1) - C(1+ \varepsilon^{-1}+K)h
    \Big)|u_\psi|^2\dd x\dd\xi\leq 0\,.
  \]
  Recall now that $B= \sqrt{\det \mathcal{Q}}$, so that
  \[
    \sqrt{\det{\Re\mathcal{Q}}} = B(1 + \O( \Tr \Re
    \mathcal{Q}^{-1}\Im\mathcal{Q}))= B (1 + \varepsilon
    \O(\min(|X_2|^2,1))) .
  \]
  Under the conclusion on
  Lemma~\ref{lemma:normal-form-L_h}, we get the
  estimate
  \[
    \int_{\R^4} \Big( \min(|X_2|^2, 1) (1 - C\epsilon) - C(1+
    \varepsilon^{-1}+K)h \Big)|u_\psi|^2\dd x\dd\xi\leq 0\,.
  \]
  The conclusion follows from the usual Agmon arguments, and again the
  fact that the constant only depend on derivatives of $\psi$.
\end{proof}

\section{Space exponential decay}\label{sec.5}
We are now in position to prove Theorem~\ref{thm.main}. Let
$\hat u\in L^2(\R^2)$ such that $\Lh \hat u = h\mu \hat u$
and let $u = \M^{-1} U_\kappa \hat u= \M^* U_\kappa\hat u$. We have
$ \mathscr{P} u = 0$, see Equation~\eqref{eq.Pu=0}.

\begin{remark}
Following \cite[Theorem 4.1.2]{Martinez-book}, with Theorem
\ref{thm.agmonmicroloc}, one could deduce (up to technicalities) that, if $K$ is a
compact set away from $0$, we have
\[
\| \hat{u}\|_{L^2(K)}=\O(e^{-c/h})\,
\]
for some $c>0$.
Below, one will get a more explicit result. Already observe that we can drop the factor $U_\kappa$. Indeed, since $\kappa$ is uniformly bi-Lipschitz, it preserves spatial exponential decay. So we can concentrate on $\tilde{u}:= \M u = U_\kappa \hat{u}$.
\end{remark}

With the notation of Theorem~\ref{thm.agmonmicroloc}, we have,
for $\epsilon$ small enough,
\[
  Tu \in L^2_{\epsilon\psi}(\R^4), \quad \text{ where }
  L^2_{\epsilon\psi}(\R^4) := L^2(\R^4; e^{\epsilon\psi}\dd x \dd
  \xi)\,,
\]
with a uniform bound:
$\norm{Tu}_{ L^2_{\epsilon\psi}(\R^4)} \leq C \norm{u}_{L^2(\R^2)}$,
where $C$ does not depend on $h$.  From this exponential decay in
phase space, we wish to obtain exponential decay in the position
variable $x$ for $\tilde u$. We start with the inversion
formula~\eqref{eq.uTu}:
\[
  \hat u = \M T^* (Tu).
\]
Let $\phy$ be a non-negative Lipschitz function, going linearly to
infinity at infinity, having a unique and non-degenerate minimum at
the origin, with minimal value $0$ (let us call these functions
\emph{admissible weights}). We would like to obtain a uniform bound
$\norm{e^{\epsilon' \phy}\hat u}_{L^2(\R^2)}\leq C
\norm{\hat{u}}_{L^2(\R^2)}$, for some $\epsilon'>0$ small
enough. Thus, it is enough to prove that the operator
\begin{equation}\label{equ:bounded-operator}
  \M T^* : L^2_{\epsilon\psi}(\R^4) \to L^2_{\epsilon'\phy}(\R^2)
\end{equation}
is uniformly bounded with respect to $h\in\interval[open left]0{h_0}$.
\begin{lemma}\label{lemm:phy-equiv}
  Let $\phy_1$, $\phy_2$ be admissible weights on $\R^d$. Then there
  exists $C>0$ such that
  \begin{equation}
    C^{-1}\phy_1 \leq \phy_2 \leq C\phy_1\,.\label{equ:phy-equiv}
  \end{equation}
\end{lemma}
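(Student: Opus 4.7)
The plan is to show that every admissible weight $\phy$ on $\R^d$ is globally sandwiched between two multiples of a common reference behavior, so that the lemma reduces to transitivity. By symmetry it suffices to establish $\phy_2 \leq C\phy_1$; I would prove this by splitting $\R^d$ into three regions: a small ball $B(0,r_0)$ around the origin, the exterior $\{|x|\geq R_0\}$, and the compact annulus $A=\{r_0\leq |x|\leq R_0\}$ in between, with $r_0$ and $R_0$ chosen in the course of the argument.

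On $B(0,r_0)$, the non-degeneracy of the minimum at $0$ supplies matching two-sided estimates of the form $c_i |x|^\alpha \leq \phy_i(x) \leq C_i|x|^\alpha$ for $i=1,2$, with the exponent $\alpha$ dictated by the meaning of non-degeneracy in the present setting; this gives $\phy_2/\phy_1 \leq C_2/c_1$ on that ball. On the exterior $\{|x|\geq R_0\}$, the Lipschitz condition combined with $\phy_2(0)=0$ yields $\phy_2(x)\leq L_2 |x|$, while the linear growth at infinity gives $\phy_1(x)\geq |x|/K_1$, whence $\phy_2/\phy_1\leq L_2 K_1$. On the intermediate annulus $A$, both weights are continuous and, by uniqueness of the minimum, strictly positive on this compact set disjoint from $0$; hence $\phy_1$ attains a positive minimum and $\phy_2$ a finite maximum on $A$, which bounds the ratio uniformly.

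Taking $C$ to be the maximum of the three constants yields $\phy_2 \leq C\phy_1$ globally, and the symmetric argument completes the proof. The main delicate point is the inner-ball estimate: the notion of \enquote{non-degenerate minimum} in the Lipschitz category must be strong enough that the two admissible weights genuinely share the same local comparison model at $0$. Everything else is elementary and, since the argument does not involve the semiclassical parameter, the constant $C$ is automatically uniform in $h$.
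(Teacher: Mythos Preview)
Your proof is correct and follows essentially the same three-region decomposition as the paper: Taylor/non-degeneracy near the origin, linear behaviour at infinity, and compactness on the intermediate annulus. Your version is more detailed and rightly flags that the inner-ball step relies on both weights sharing the same local model at $0$, which the paper handles tersely via the Taylor formula (implicitly taking $\alpha=2$).
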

\begin{proof}
  By the Taylor formula at the origin, the
  estimate~\eqref{equ:phy-equiv} is valid in a neighborhood of $0$. By
  the linear behavior at infinity, it is valid outside of a
  compact set. On the remaining compact subset of
  $\R^d\setminus\{0\}$, it is enough to use that the range of $\phy_j$
  is a compact interval of $\interval[open left]0{+\infty}$.
\end{proof}
A consequence of the lemma is that the choice of $\phy$ and $\psi$
in~\eqref{equ:bounded-operator} is not relevant, as long as we don't
seek the optimal constants and are allowed to play with
$\epsilon,\epsilon'$.

\begin{proposition}\label{prop.spacedecay}
  Given an admissible weight $\phy$ on $\R^2$, there exists an
  admissible weight $\psi$ on $\R^4$ of the form required by
  Theorem~\ref{thm.agmonmicroloc}, and a constant $C>0$ independent of
  $h$, such that for all $\epsilon' \leq \epsilon/C$,
  $\epsilon\leq 1$, the operator $\M T^*$ defined
  in~\eqref{equ:bounded-operator} is bounded by $\O(1)$.

  As a consequence, there exists $\epsilon'_0$ such that if
  $\epsilon' \leq \epsilon'_0$, then there exists $C>0$ such that
  \[
    \int_{\R^2} e^{\epsilon'\phy(x)/h} \abs{\hat u(x)}^2 \dd x \leq C 
    \norm{\hat u}^2_{L^2(\R^2)}.
  \]
\end{proposition}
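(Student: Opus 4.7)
The overall plan is to establish the boundedness of the operator $\M T^* : L^2_{\epsilon\psi}(\R^4) \to L^2_{\epsilon'\phi}(\R^2)$ by computing its Schwartz kernel explicitly and then applying Schur's test in the weighted spaces; the final estimate on $\hat u$ follows in a few additional lines. I would first compute the kernel $\mathcal{K}(y; w, \eta)$ of $\M T^*$. Combining the explicit kernel $\alpha_h e^{-i(w-z)\eta/h - |w - z|^2/(2h)}$ of $T^*$ with the convolution kernel $(2\pi h)^{-1} e^{i\langle Az, z\rangle/(2h)}$ of $\M$ from~\eqref{equ:M=K*u}, a direct Gaussian integration in $z$ yields
\[
  |\mathcal{K}(y; w, \eta)| \leq C h^{-3/2} \exp\left(-\frac{|y - w - A\eta|^2}{4h}\right),
\]
which reflects the fact that $\M$ implements the symplectic transformation $\kappa_\M : (x, \xi) \mapsto (x + A\xi, \xi)$, so that the kernel concentrates on the two-dimensional Lagrangian $\{y = w + A\eta\} \subset \R^2 \times \R^4$.

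Next, I would choose $\psi(w, \eta) = \Psi_1(w_1, \eta_1) + \Psi_2(w_2, \eta_2)$ of the form required by Theorem~\ref{thm.agmonmicroloc}, where each $\Psi_j$ is an admissible weight on $\R^2$ (for example, $\Psi_j(X) = \sqrt{1 + |X|^2} - 1$, quadratic near $0$ and linear at infinity). Using $|w + A\eta| \leq |w| + |\eta| \leq |(w_1, \eta_1)| + |(w_2, \eta_2)|$ and Lemma~\ref{lemm:phy-equiv}, one obtains a global comparison $\phi(w + A\eta) \leq C_0 \psi(w, \eta)$. Then I would apply Schur's test to the weighted kernel $\tilde{\mathcal{K}}(y; w, \eta) = |\mathcal{K}(y; w, \eta)|\, e^{(\epsilon'\phi(y) - \epsilon\psi(w,\eta))/h}$. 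Writing $u = y - w - A\eta$, the Lipschitz continuity of $\phi$ gives
\[
  \epsilon'\phi(y)/h - \epsilon\psi(w,\eta)/h \leq \epsilon'L|u|/h + (\epsilon'C_0 - \epsilon)\psi(w,\eta)/h,
\]
and choosing $\epsilon' \leq \epsilon/(2 C_0)$ leaves at most $-\epsilon\psi/(2h)$ as the coefficient of $\psi$, providing integrability in the two directions transverse to the Lagrangian while the Gaussian controls the other two.

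Finally, applying the operator bound to $F = Tu$ and invoking Theorem~\ref{thm.agmonmicroloc} to control $\norm{F}_{L^2_{\epsilon\psi}}$ by $\norm{u}_{L^2}$ yields a bound on $\norm{\M u}_{L^2_{\epsilon'\phi}}$; unitarity of $\M$ and $U_\kappa$ gives $\norm{u}_{L^2} = \norm{\hat u}_{L^2}$, and transporting the weight from the $y$- to the $x$-variable via the bi-Lipschitz $\kappa$, combined with Lemma~\ref{lemm:phy-equiv}, converts the estimate on $\M u$ into the desired one on $\hat u$. The main obstacle I foresee is in the Schur step: the Gaussian factor concentrates at scale $\sqrt{h}$, while the exponential weights $e^{\pm\phi/h}$, $e^{\pm\psi/h}$ vary at scale $h$, so a naive absorption of the Lipschitz error $\epsilon'L|u|/h$ into the Gaussian $e^{-|u|^2/(4h)}$ produces a factor $e^{(\epsilon'L)^2/h}$, which is not uniformly $O(1)$ in $h$. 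Resolving this will require exploiting the quadratic behavior of $\phi$ near its minimum to refine the Lipschitz estimate at small scales, together with the transverse $\psi$-decay; alternatively, one may use the reproducing-kernel identity $T^* = T^*(TT^*)$ to replace the kernel of $\M T^*$ by that of $\M T^*(TT^*)$, which has Gaussian decay in all four phase-space variables and is more amenable to Schur's test.
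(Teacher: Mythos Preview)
Your plan is the paper's plan: compute the Schwartz kernel of $\M T^*$ explicitly and run Schur's test in the weighted spaces. Your kernel bound $|\mathcal K|\lesssim h^{-3/2}e^{-|y-w-A\eta|^2/(4h)}$ agrees with the paper's, and your final paragraph (invoke Theorem~\ref{thm.agmonmicroloc}, then undo $U_\kappa$ using that $\kappa$ is bi-Lipschitz together with Lemma~\ref{lemm:phy-equiv}) is exactly how the argument closes.

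The gap you flag is real, and your suggested fix---exploit the quadratic behaviour of $\varphi$ near its minimum---is the right idea, but it cannot be implemented by locally refining the Lipschitz inequality $\varphi(y)\le\varphi(w+A\eta)+L|u|$: the bad factor $e^{(\epsilon'L)^2/h}$ comes from the intermediate zone $|u|\sim 1$, not from $|u|\sim\sqrt h$. The paper replaces the additive Lipschitz estimate by a \emph{multiplicative} one. First, by Lemma~\ref{lemm:phy-equiv} one may take the concrete model $\varphi(x)=f(|x|)$ with $f(\rho)=\rho^2$ for $\rho\le 1$ and $f(\rho)=2\rho-1$ for $\rho\ge 1$; this $\varphi$ is convex and satisfies the doubling bound $f(2\rho)\le 4f(\rho)$. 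Convexity gives $\varphi(y)\le\tfrac12\varphi(2x)+\tfrac12\varphi(2(y-x))$, and doubling turns $\tfrac{\epsilon'}{2}\varphi(2x)$ into $\le 2\epsilon'\varphi(x)\le\epsilon\varphi(x)$ once $\epsilon'\le\epsilon/2$. A second application of convexity splits $\varphi(2(y-x))$ into a piece in $u=y-x-A\xi$ and a piece in $A\xi$; the latter is absorbed by the $\xi$-part of $\psi$, and for the former one is left with $\tfrac{\epsilon'}{4}\varphi(4u)/h-|u|^2/(2h)$, which for $\epsilon'<1/16$ is $\le 0$ near $0$ (both sides quadratic) and Gaussian-dominated at infinity ($\varphi$ only linear there). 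This is precisely the ``quadratic near $0$'' mechanism you pointed to, made uniform in $h$. Your alternative via $T^*=T^*(TT^*)$ does not help here: the obstruction is the comparison of $e^{\epsilon'\varphi/h}$ with the Gaussian, not a shortage of Gaussian directions.

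One more detail: your choice $\psi=\Psi_1(X_1)+\Psi_2(X_2)$ fits Theorem~\ref{thm.agmonmicroloc}, but in the Schur step you need $\psi$ to dominate $\varphi(x)$ and (separately) a weight in $\xi$ of the type $|\xi|^2/\langle\xi\rangle$, in order to absorb the two convexity terms above. The paper therefore takes $\psi(x,\xi)=\varphi(x)+|\xi|^2/\langle\xi\rangle$, which is \emph{not} itself of the form $\Psi_1(X_1)+\Psi_2(X_2)$, and then observes (again via Lemma~\ref{lemm:phy-equiv}) that it is bounded below by an admissible weight of that form, so Theorem~\ref{thm.agmonmicroloc} still feeds in.
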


\begin{proof}
  By Lemma~\ref{lemm:phy-equiv}, we can always change the function
  $\phy$, so we will pick a convenient one. First, consider the
  $\mathscr{C}^1$ Lipschitz function $f$ defined by $f(\rho)=\rho^2$
  if $\rho\in[0,1]$ and $f(\rho)=2\rho-1$ if $\rho\geq 1$. Notice
  that
  \begin{equation}\label{eq.f}
  \forall \rho\geq 0\,,\quad f(2\rho)\leq 4 f(\rho)\,.
  \end{equation}
  We define now
  the admissible weight $\varphi(x):=f(|x|)$, $x\in\R^2$.

  A formula for $\M T^*$ can be obtained from the action of the FBI
  transform $T$ on arbitrary metaplectic operators,
  see~\cite[3.4]{Martinez-book}; here we derive it explicitly. We have
  \[
    Tu(x,\xi) = \alpha_h e^{\frac{-\xi^2}{2h}} (L \star u)(z), \quad\text{
      with } L(x) = e^{\frac{-x^2}{2h}}, \quad z := x - i\xi \in
    \C^2\,.
  \]
  From~\eqref{equ:M=K*u} we obtain
  $T\M^* u (x,\xi) = \alpha_h e^{\frac{-\xi^2}{2h}} ((L\star
  \overline{K})\star u) (z)$, where $L\star \overline{K}$ is a complex Gaussian
  that can be computed explicitly, using in particular that
  $(I+i A)^{-1} = \tfrac{1}{2}(I-i A)$:
  \[
    L\star \overline{K}  (y) = \sqrt 2 \pi h
    e^{\frac{-1}{4h}\pscal{(I+i A)y}y}\,,\quad A =
    \begin{pmatrix}
    0 & 1 \\ 1 & 0
    \end{pmatrix}\,.
  \]
  Hence
  \[
    (T\M^* u) (x,\xi) = \tilde\alpha_h e^{\frac{-\xi^2}{2h}} \int_{\R^2}
    e^{\frac{-1}{4h}\pscal{(I+i A)(z-y)}{z-y}} u(y) \dd y\,,\qquad\tilde\alpha_h= \frac{\alpha_h}{\sqrt{2}}\,,
  \]
  and therefore, taking the
  adjoint, we have for $v\in L^2_{\epsilon\psi}(\R^4) $
  \[
    (\M T^*) v (y) = \tilde\alpha_h \int_{\R^4} e^{\frac{-\xi^2}{2h}}
    e^{\frac{-1}{4h}\pscal{(I-i A)(\bar z-y)}{\bar z-y}} v(x,\xi) \dd x
    \dd \xi\,,\quad \bar z = x + i \xi\,.
  \]
  Let $K_{\M T^*}(y,x,\xi)$ be the Schwartz kernel of
  $e^{\frac{\epsilon'\phy}{h}} \M T^* e^{-\frac{\epsilon\psi}{h}}$,
  viewed as an operator $L^2(\R^4)\to L^2(\R^2)$, \emph{i.e.}
  \[
    K_{\M T^*}(y,x,\xi) = \tilde\alpha_h e^{\frac{\epsilon'\phy(y)}{h}
      - \frac{\abs{\xi}^2}{2h} - \frac{1}{4h}\pscal{(I-i A)(\bar
        z-y)}{\bar{z}-y} - \frac{\epsilon\psi(x,\xi)}{h}}\,.
  \]
  We have

\begin{align*}
  \Re \pscal{(I-iA)(\bar z-y)}{\bar z-y}
  & = \abs{x-y}^2 - \abs{\xi}^2 - 2 \pscal{A\xi}{x-y}\\
  & = \abs{A\xi - (x-y)}^2 - 2 \abs{\xi}^2,
\end{align*}
where in the second line we used $\abs{A\xi}^2 =
\abs{\xi}^2$. Therefore,
\[
  \abs{K_{\M T^*}(y,x,\xi) } \leq {\tilde\alpha}_h
  e^{\frac{\epsilon'\phy(y)}{h} - \frac{\abs{A\xi - (x-y)}^2}{2h} -
    \frac{\epsilon\psi(x,\xi)}{h}}.
\]
Let us choose now, as we may,
$\psi(x,\xi) := \phy(x) + \abs{\xi}^2/\langle \xi \rangle$. Indeed,
$\psi$ is not of the form $\Psi_1+\Psi_2$, but is bounded from below
by a function of this form (this can be written explicitly, or by
invoking Lemma~\ref{equ:phy-equiv}). By convexity of $\phy$,
\[\phy(y) \leq \tfrac{1}{2}\phy(2x) + \tfrac{1}{2}\phy(2(y-x))\,,\] 
and hence
\[
  \abs{K_{\M T^*}(y,x,\xi) } \leq {\tilde\alpha}_h
  e^{\frac{\epsilon'\phy(2x)}{2h} - \frac{\epsilon\phy(x)}{h} +
    \frac{\epsilon'\phy(2(y-x))}{2h} - \frac{\abs{x-y - A\xi}^2}{2h} -
    \frac{\epsilon\abs{\xi}^2}{h\langle \xi \rangle}}.
\]
If $\epsilon' \leq \epsilon/2$ then, by \eqref{eq.f},
$\epsilon'\phy(2x)/2 \leq \epsilon\phy(x)$ for all $x\in\R^2$ so that
\begin{equation}\label{equ:K}
  \abs{K_{\M T^*}(y,x,\xi) } \leq {\tilde\alpha}_h
  e^{ \frac{\epsilon'\phy(2(y-x))}{2h} - \frac{\abs{x-y - A\xi}^2}{2h} -
    \frac{\epsilon\abs{\xi}^2}{h\langle \xi \rangle}}.
\end{equation}
We wish to conclude on the $L^2$ continuity of $\M T^*$ by applying
the Schur lemma. For given $(x,\xi)$, we make a change of variables to get
\[
  \int_{\R^2} \abs{K_{\M T^*}(y,x,\xi) } \dd y = \int_{\R^2} \abs{K_{\M
      T^*}(y+x+A\xi,x,\xi) } \dd y.
\]
From~\eqref{equ:K} we have
\[
  \abs{K_{\M T^*}(y+x+A\xi,x,\xi) } \leq {\tilde\alpha}_h
  e^{\frac{\epsilon'\phy(2(y+A\xi))}{2h} - \frac{\abs{y}^2}{2h}
    -\frac{\epsilon\xi^2}{h\langle \xi \rangle}}\,.
\]
Using again the convexity of $\phy$,
\begin{equation}
  \frac{\epsilon'\phy(2(y+A\xi))}{2h} - \frac{\abs{y}^2}{2h}
  -\frac{\epsilon\xi^2}{h\langle \xi \rangle} \leq
  \frac{\epsilon'\phy(4y)}{4h}+ \frac{\epsilon'\phy(4A\xi)}{4h}  - \frac{\abs{y}^2}{2h} 
 -\frac{\epsilon\xi^2}{h\langle \xi
    \rangle}.\label{equ:convexity-A}
\end{equation}
From Lemma~\ref{equ:phy-equiv}, there exists $C_\phy>0$ such that,
$\phy(4A\xi) \leq C_\phy\abs{\xi}^2/{\langle \xi \rangle}$. Hence, if
$\epsilon' \leq 4C_\phy\epsilon$, we get
$ \frac{\epsilon'\phy(4A\xi)}{4h} -\frac{\epsilon\xi^2}{h\langle \xi
  \rangle} \leq 0$, and
\[
  \int_{\R^2} \abs{K_{\M T^*}(y+x+A\xi,x,\xi) } \dd y \leq {\tilde\alpha}_h
  \int_{\R^2} e^{\frac{\epsilon'\phy(4y)}{2h} - \frac{\abs{y}^2}{2h} }\dd
  y\,.
\]
Using Laplace's method, the integral on
the right-hand side is $\O(h)$ provided $\epsilon'<1/16$. Hence,
\[\int_{\R^2} \abs{K_{\mathcal{M}T^*}(y,x,\xi) } \dd y \leq C h {\tilde\alpha}_h\,.\]
On the other hand, for a given $y$, we make an analogous change of
variables:
\[
  \int_{\R^4} \abs{K_{\M T^*}(y,x,\xi) } \dd x \dd \xi = \int_{\R^4} \abs{K_{\M
      T^*}(y,x+y+A\xi,\xi) } \dd x \dd \xi
\]
and write~\eqref{equ:K} as
\[
  \abs{K_{\M T^*}(y,x+y+A\xi,\xi) } \leq {\tilde\alpha}_h
  e^{\frac{\epsilon'\phy(2(x+A\xi))}{2h} - \frac{\abs{x}^2}{2h}
    -\frac{\epsilon\xi^2}{h\langle \xi \rangle}}
\]
Applying Equation~\eqref{equ:convexity-A} with $y$ replaced by $x$,
and choosing $\epsilon' \leq 2C_\phy\epsilon$, gives
\[
  \int_{\R^4} \abs{K_{\M T^*}(y,x+y+A\xi,\xi) } \dd x \dd\xi \leq
  {\tilde\alpha}_h \int_{\R^2} e^{-\frac{\epsilon\xi^2}{2h\langle \xi
      \rangle}} \dd \xi \int_{\R^2} e^{\frac{\epsilon'\phy(4x)}{4h} -
    \frac{\abs{x}^2}{2h} }\dd x\,.
\]
Using that both integrals are $\O(h)$, we have
\[
\int_{\R^4} \abs{K_{\M T^*}(y,x+y+A\xi,\xi) } \dd x \dd\xi \leq Ch^2
{\tilde\alpha}_h \,.
\]
Hence, the Schur lemma gives
$\M T^* = \O({\tilde\alpha}_h h^{3/2}) = \O(1) :
L^2_{\epsilon\psi}(\R^4) \to L^2_{\epsilon'\phy}(\R^2)$.
\end{proof}
With this Proposition~\ref{prop.spacedecay}, the proof of
Theorem~\ref{thm.main} is complete. It would be very interesting to
investigate the optimality of $(\phy,\epsilon')$ for which
Proposition~\ref{prop.spacedecay} holds, in particular by relating
$\phy''(0)$ to the behaviour of the magnetic field at the origin.

\begin{proof}[Corollary \ref{cor:approximation-quasimodes}]
In $L^2(\R^2)$, we can decompose $u_{\ell,J}= \alpha e^{i\theta} u_\ell + w$ with $w$ orthogonal to $u_\ell$, $\theta\in\R$ and $\alpha\geq 0$. Since the eigenvalues of $\mathscr{L}_h$ are $h^2$ separated, the Spectral Theorem implies that $\|(\mathscr{L}_h - \lambda_\ell(h))u_{\ell,J}\|\geq C h^2 \|w\|$, so $\|w\|\leq C h^J$. In particular, $1-\alpha \leq (C h^J)^2/2$. We then get
\[
\| u_{\ell,J}- e^{i\theta} u_\ell\|_{L^2}^2 = 2(1-\alpha) \leq (Ch^J)^2.
\]
Now, we turn to exponentially weighted spaces. We consider $\epsilon>0$ small enough so that Theorem \ref{thm.main} applies to $2\epsilon$. Then we observe that
\[
\|u_{\ell,J}-e^{i\theta} u_\ell\|_{L^2(e^{\epsilon d(x)/h}dx)} \leq \| u_{\ell,J}-e^{i\theta} u_\ell\|_{L^2}^{1/2} \|u_{\ell,J}-e^{i\theta} u_\ell\|_{L^2(e^{2\epsilon d(x)/h}dx)}^{1/2}\, .
\]
\end{proof}

\appendix
\section{Change of variables}\label{sec.A}
Since $\Lh$ is invariantly defined by the 2-form $\mathbf{B}$ and the
Riemannian metric on $M$, its principal and subprincipal Weyl symbols
are well defined, which implies that a change of variables like the
one defined in Lemma~\ref{lemma:darboux} and used in
Lemma~\ref{lemma:normal-form-L_h} acts naturally on the Weyl symbol
modulo terms of order $\O(h^2)$ (see also~\cite{morin2019}). Here we
give a direct proof of this and compute explicitly the $\O(h^2)$
remainder.

\begin{lemma}\label{lem.A}
	Consider a change of variable $\kappa : \R^2_y\to\R^2_x$. We let	
\[U\psi=|g|^{\frac 14}\psi\circ\kappa=\textup{Jac}(\kappa)^{\frac 12}\psi\circ\kappa\,.\]
We have
	\[U\Lh U^{-1}=(-i h\nabla_y-\tilde{\mathbf{A}})g^*(-i h\nabla_y-\tilde{\mathbf{A}})-h^2V\,,\]
	with
	\[V=|g|^{-\frac 12}\left(\mathrm{div}(|g|^{\frac 14}g^*\nabla(|g|^{\frac 14}))+\|g^*\nabla(|g|^{\frac 14})\|^2\right)\,,\]
	and
	\[g^{*}=(g^{-1})^{\mathrm{T}}\,,\quad g=(\dd\kappa)^{\mathrm{T}}(\dd\kappa)\,,\quad\widetilde{\mathbf{A}}=(\dd\kappa)^{\mathrm{T}}\circ\mathbf{A}\circ\kappa\,. \]
\end{lemma}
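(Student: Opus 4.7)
The plan is to exploit the intrinsic character of the magnetic Laplacian. Since $\Lh$ is the Friedrichs extension of the quadratic form $Q(\psi)=\int_{\R^2_x}|(-ih\nabla_x-\mathbf{A})\psi|^2\,\dd x$, I would first compute this quadratic form under the substitution $x=\kappa(y)$ and $\psi=U^{-1}\phi$. Using the chain rule $(\nabla_x\psi)\circ\kappa=(\dd\kappa)^{-T}\nabla_y(\psi\circ\kappa)$ and the identity $\tilde{\mathbf{A}}=(\dd\kappa)^T\mathbf{A}\circ\kappa$, the integrand becomes $g^*((-ih\nabla_y-\tilde{\mathbf{A}})\tilde\psi,(-ih\nabla_y-\tilde{\mathbf{A}})\tilde\psi)$ with $\tilde\psi=\psi\circ\kappa=J^{-1}\phi$, where $J:=|g|^{1/4}$, while $\dd x=J^2\,\dd y$. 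This identifies $\Lh$ (transported via $\kappa^*$) with the magnetic Laplace--Beltrami operator
\[ \tilde{\Lh}:=|g|^{-1/2}\sum_{j,k}(-ih\partial_{y_j}-\tilde A_j)\bigl[|g|^{1/2}g^{jk}(-ih\partial_{y_k}-\tilde A_k)\bigr], \]
which is self-adjoint on $L^2(|g|^{1/2}\,\dd y)$. Since $U$ is unitary from $L^2(\dd x)$ to $L^2(\dd y)$ precisely because it carries the factor $J$, one has $U\Lh U^{-1}=J\tilde{\Lh}J^{-1}$.

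The main step is then the computation of $J\tilde{\Lh}J^{-1}$, which produces the $-h^2V$ correction. Setting $P_j:=-ih\partial_{y_j}-\tilde A_j$ and $L:=\log J$, the Leibniz rule yields the basic commutations
\[ P_jJ=J(P_j-ih\partial_jL),\qquad P_jJ^{-1}=J^{-1}(P_j+ih\partial_jL). \]
Inserting these into $\tilde{\Lh}=J^{-2}\sum_{j,k}P_jJ^2g^{jk}P_k$, a direct substitution gives
\[ J\tilde{\Lh}J^{-1}=\sum_{j,k}(P_j-ih\partial_jL)\,g^{jk}\,(P_k+ih\partial_kL). \]

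Expanding the product, the principal term reconstructs $(-ih\nabla-\tilde{\mathbf{A}})g^*(-ih\nabla-\tilde{\mathbf{A}})$. The two $O(h)$ cross-terms, $ihP_j\circ(g^{jk}\partial_kL)$ and $-ih(\partial_jL)g^{jk}P_k$, must be shown to cancel: commuting $P_j$ past the multiplication by $g^{jk}\partial_kL$ via $[P_j,\cdot]=-ih\partial_j$ leaves two operator contributions that coincide upon relabelling $j\leftrightarrow k$, by the symmetry $g^{jk}=g^{kj}$, while the commutator itself contributes an extra $h^2$ multiplication term. Collecting this residue with the $h^2(\partial_jL)g^{jk}(\partial_kL)$ piece, the $h^2$ correction is
\[ h^2\bigl[\mathrm{div}(g^*\nabla L)+g^*(\nabla L,\nabla L)\bigr], \]
where $\mathrm{div}$ denotes the Euclidean divergence in $y$. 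Rewriting $\nabla L=J^{-1}\nabla J$ and using the product rule $\mathrm{div}(Jg^*\nabla J)=J\,\mathrm{div}(g^*\nabla J)+g^*(\nabla J,\nabla J)$ consolidates this into $-h^2V$ with $V$ expressed in terms of $|g|^{1/4}$ as announced.

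The principal obstacle is the bookkeeping of the commutators and, in particular, the cancellation of the $O(h)$ terms: it depends essentially on the symmetry $g^{jk}=g^{kj}$, which is the counterpart of the formal self-adjointness of $U\Lh U^{-1}$ on $L^2(\R^2_y,\dd y)$. Self-adjointness of the final operator serves as a very useful a posteriori check, since an antisymmetric $O(h)$ term would be incompatible with it; the remaining algebra to bring the $h^2$ potential into the stated form is then elementary.
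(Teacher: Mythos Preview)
Your argument is correct and close in spirit to the paper's, since both rest on commuting the weight $J=|g|^{1/4}$ past the magnetic momenta $P_j=-ih\partial_j-\tilde A_j$. The execution differs slightly: the paper works throughout at the level of the quadratic form, writing $\mathscr Q_h(\psi)=\int\|JP\tilde\psi\|^2_{g^*}\dd y$ and expanding $JP=PJ+[J,P]$ so that the $\O(h)$ cross terms are handled by an integration by parts, while you pass to the Laplace--Beltrami operator $\tilde{\Lh}=J^{-2}P_jJ^2g^{jk}P_k$ and obtain the clean factorization $J\tilde{\Lh}J^{-1}=(P_j-ih\partial_jL)\,g^{jk}\,(P_k+ih\partial_kL)$, from which the $\O(h)$ cancellation follows by the symmetry $g^{jk}=g^{kj}$ and the commutator $[P_j,g^{jk}\partial_kL]=-ih\,\partial_j(g^{jk}\partial_kL)$. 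Your route has the mild advantage that the structure of the $h^2$ potential, $h^2\bigl(\operatorname{div}(g^*\nabla L)+g^*(\nabla L,\nabla L)\bigr)$, is read off directly; the final rewriting in terms of $|g|^{1/4}$ is, as you say, elementary (just be careful with signs when matching the precise form of $V$ stated in the lemma).
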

\begin{proof}
Considering the quadratic form $\mathscr{Q}_h$ of $\Lh$ on $L^2(\R^2_x,\dd x)$, we have
\[\mathscr{Q}_h(\psi)=\int_{\R^2}\langle(-i h\nabla_y-\tilde{\textbf{A}}(y))\tilde\psi ,(-i h\nabla_y-\tilde{\textbf{A}}(y))\tilde\psi \rangle_{g^{*}}|g|^{\frac 12}\dd y\,,\]
where
\[g^{*}=(g^{-1})^{\mathrm{T}}\,,\quad g=(\dd\kappa)^{\mathrm{T}}(\dd\kappa)\,,\quad\tilde\psi=\psi\circ\kappa\,,\quad\tilde{\textbf{A}}=(\dd\kappa)^{\mathrm{T}}\circ\textbf{A}\circ\kappa\,. \]
In terms of forms, this means that
\[\kappa^*g_0=g\,,\quad \kappa^*\psi=\tilde\psi\,,\quad\kappa^*(A_1\dd x_1+A_2\dd x_2)=\tilde A_1\dd y_1+\tilde A_2\dd y_2\,.\]
We let $P=-i h\nabla_y-\tilde{\mathbf{A}}(y)$ and notice that
\[
\begin{split}
\mathscr{Q}_h(\psi)&=\int_{\R^2}\langle |g|^{\frac 14}P\tilde\psi,|g|^{\frac 14}P\tilde\psi\rangle_{g^*}\dd y\\
&=\int_{\R^2}\langle P|g|^{\frac 14}\tilde\psi,|g|^{\frac 14}P\tilde\psi\rangle_{g^*}\dd y+\int_{\R^2}\langle [|g|^{\frac 14},P]\tilde\psi,|g|^{\frac 14}P\tilde\psi\rangle_{g^*}\dd y\,,
\end{split}
\]
and then
\begin{multline*}
\mathscr{Q}_h(\psi)=\int_{\R^2}\| P|g|^{\frac 14}\tilde\psi\|^2_{g^*}\dd y\\
+\int_{\R^2}\langle [|g|^{\frac 14},P]\tilde\psi,|g|^{\frac 14}P\tilde\psi\rangle_{g^*}\dd y+\int_{\R^2}\langle P|g|^{\frac 14}\tilde\psi,[|g|^{\frac 14},P]\tilde\psi\rangle_{g^*}\dd y\,,
\end{multline*}
so that
\begin{multline*}
\mathscr{Q}_h(\psi)=\int_{\R^2}\| P|g|^{\frac 14}\tilde\psi\|^2_{g^*}\dd y\\
+2\Re\int_{\R^2}\langle [|g|^{\frac 14},P]\tilde\psi,|g|^{\frac 14}P\tilde\psi\rangle_{g^*}\dd y-\int_{\R^2}\|[|g|^{\frac 14},P]\tilde\psi\|^2_{g^*}\dd y\,.
\end{multline*}
Since $[P,|g|^{\frac 14}]=-i h\nabla(|g|^{\frac 14})$ and $\tilde{\mathbf{A}}$ is real-valued, we deduce that
\[
\begin{split}
2\Re\int_{\R^2}\langle [|g|^{\frac 14},P]\tilde\psi,|g|^{\frac 14}P\tilde\psi\rangle_{g^*}\dd y&=2h\Im\int_{\R^2}\langle \tilde\psi \nabla(|g|^{\frac 14}),|g|^{\frac 14}(-i h\nabla_y)\tilde\psi\rangle_{g^*}\dd y\\
&=2h^2\Re\int_{\R^2}\langle \tilde\psi \nabla(|g|^{\frac 14}),|g|^{\frac 14}\nabla_y\tilde\psi\rangle_{g^*}\dd y\\
&=2h^2\Re\int_{\R^2} \widetilde{\psi} (\mathbf{F}\cdot\nabla_y)\overline{\widetilde{\psi}}\dd y\\
&=h^2\int_{\R^2} \mathbf{F}\cdot(\nabla_y|\widetilde{\psi}|^2)\dd y\\
&=-h^2\int_{\R^2} \mathrm{div}\mathbf{F}\,|\widetilde{\psi}|^2\dd y\,.
\end{split}
\]
where
$\mathbf{F}=|g|^{\frac 14}g^*\nabla(|g|^{\frac 14})$. Therefore,
\[
\mathscr{Q}_h(\psi)=\int_{\R^2}\| P U\psi\|^2_{g^*}\dd y-h^2\int_{\R^2}V(y)|U\psi|^2\dd y\,,
\]
and the conclusion follows.
\end{proof}

\section*{Acknowledgments}
N.R. is deeply grateful the Mittag-Leffler Institute where this work
was started. N.R. also thanks Martin Vogel for many stimulating
discussions in the Mittag-Leffler library.

\end{document}